\documentclass{elsarticle}
\usepackage{graphicx}
\usepackage{amscd,amsmath,amstext,amsfonts,amsbsy,amssymb,amsthm,eufrak}
\usepackage{hyperref}
\usepackage[toc,page]{appendix}
\usepackage{chngcntr}

\newtheorem{theo}{Theorem}
\newtheorem{lemm}[theo]{Lemma}

\newtheorem{prop}[theo]{Proposition}
\newtheorem{conj}[theo]{Conjecture}
\newdefinition{defi}{Definition}

\newdefinition{rema}{Remark}

\makeatletter
\def\ps@pprintTitle{
 \let\@oddhead\@empty
 \let\@evenhead\@empty
 \def\@oddfoot{}%
 \let\@evenfoot\let\@oddfoot }
\makeatother

\begin{document}

\title{Numerical analysis of the rescaling method for parabolic problems with blow-up in finite time}
\author{V. T. Nguyen}
\ead{vtnguyen@math.univ-paris13.fr}
\address{Universit\'e Paris 13, Sorbonne Paris Cit\'e,\\ LAGA, CNRS (UMR 7539), F-93430, Villetaneuse, France.}

\begin{abstract}
In this work, we study the numerical solution for parabolic equations whose solutions have a common property of  blowing up in finite time and the equations are invariant under the following scaling transformation 
$$u \mapsto u_\lambda(x,t):= \lambda^{\frac{2}{p-1}}u(\lambda x, \lambda^2 t).$$
For that purpose, we apply the rescaling method proposed by Berger and Kohn \cite{BKcpam88} to such problems. The convergence of the method is proved under some regularity assumption. Some numerical experiments are given to derive the blow-up profile verifying henceforth the theoretical results.
\end{abstract}

\begin{keyword}
Numerical blow-up \sep finite-time blow-up \sep nonlinear parabolic equations.
\end{keyword}
\maketitle

\section{Introduction}
We study the solution of the following parabolic problem
\begin{equation} \label{equ:semiHeatEqu}
\left\{\begin{array}{lll}
u_t(x,t) &=  u_{xx}(x,t) + g(u, u_x),\quad & \text{in}\quad \Omega \times (0,T),\\
u(x,t)&= 0 \quad & \text{on}\quad \partial \Omega \times [0,T),\\
u(x,0) &= u_0(x), \quad & \text{on}\quad \bar{\Omega}.
\end{array}
\right.
\end{equation}
where $u(t): x \in \Omega \mapsto u(x,t) \in \mathbb{R}$, $p > 1$. The function $g$ is given by
\begin{equation*}\label{equ:funcg}
g(u,u_x) = |u|^{p-1}u + \beta|u_x|^q, \quad \text{with} \quad q = \frac{2p}{p+1},
\end{equation*}
for some $\beta \in \mathbb{R}$. This equation can be viewed as a population dynamic model (see \cite{SOUmmas96} for an example).\\
We also consider the complex Ginzburg-Landau equation, 
\begin{equation} \label{equ:GL}
\left\{\begin{array}{lll}
u_t(x,t) &= (1 + \imath \gamma)u_{xx} + (1 + \imath \delta) |u|^{p-1}u,\quad & \text{in}\quad \Omega \times (0,T),\\
u(x,t)&= 0 \quad & \text{on}\quad \partial \Omega \times [0,T),\\
u(x,0) &= u_0(x), \quad & \text{on}\quad \bar{\Omega}.
\end{array}
\right.
\end{equation}
where $u(t): x \in \Omega \to u(x,t) \in \mathbb{C}$, $p> 1$ and the constants $\gamma, \delta $ are real. This equation appears in various physical situations.  An example is the theory of phase transitions and superconductivity. We refer to Popp et al. \cite{POPphd98} and the references therein for the physical background.\\

In both problems, $\Omega$ is a bounded interval and $u_0: \bar{\Omega} \to \mathbb{R}$ is a given initial value that belongs to $H$ where  $H \equiv W^{1,\infty}(\Omega)$ for equation \eqref{equ:semiHeatEqu} and $H \equiv L^{\infty}(\Omega)$ for equation \eqref{equ:GL}. In particular, we consider $\Omega = (-1, 1)$ and $u_0$ is positive, nontrivial, smooth and verifies $u_0(-1) = u_0(1) = 0$; in addition, $u_0$ is symmetric and nondecreasing on the interval $(-1,0)$. Thanks to a fixed-point argument, the Cauchy problem for equation \eqref{equ:semiHeatEqu} can be solved in $W^{1,\infty}(\Omega)$, locally in time. For equation \eqref{equ:GL}, we solve it in $L^{\infty}(\Omega)$. Then, it is easy to see that the maximal solution is either global in time, or exists only for $t \in [0,T)$ for some $T > 0$. In that case, the solution 
blows up in finite time $T$, namely,
$$\lim_{t \to T} \|u(t)\|_H = + \infty,$$
and $T$ is called the blow-up time of $u(t)$.

When $\beta = 0$, the theoretical part for equation \eqref{equ:semiHeatEqu} is largely well-understood. The literature on the subject is huge, so we refer the reader to the book by Souplet and Quittner \cite{QSbook07}. When $\beta \ne 0$ and $q > 0$, less is known about blow-up for equation \eqref{equ:semiHeatEqu}. As a matter of fact, we loose the gradient structure, and energy methods break down. We keep however a maximum principal. We have several contributions on the subject by \cite{CWjma89}, \cite{SOUmmas96}, \cite{STWiumj96} and \cite{EZsema11}. Note that our choice $q = \frac{2p}{p+1}$ is critical in the sense that it is the only choice that makes equation \eqref{equ:semiHeatEqu} invariant under the dilation given in \eqref{equ:ScaPro} below. As for equation \eqref{equ:GL}, when $\gamma \ne \delta$, we have no gradient structure nor maximum principle. Therefore, classical methods cannot be applied here. Up to our knowledge, there are not many papers on this subject, apart from the paper of Popp et al. \cite{POPphd98} and the paper by Masmoudi and Zaag \cite{MZjfa08} who construct a stable blow-up solution. There is also a paper by Cazenave, Dickstein and Weissler \cite{CDWjma13} when $\gamma = \delta$ (note that in this case, there is a Lyapunov functional).
\bigskip

In comparison with the theoretical aspects, the numerical analysis of blow-up has received little attention, particularly on the numerical blow-up profile. For other numerical aspects related to sufficient blow-up conditions, the blow-up rate, the blow-up time and the blow-up set, there are several studies for \eqref{equ:semiHeatEqu} in the case $\beta = 0$. The first work on this problem was done in \cite{NAKamo76, NUna77} by using the finite difference and finite element method on a uniform spatial mesh. For sufficient blow-up conditions, the solution of semi or full-discretized equation blowing up in finite time was established in \cite{ALManm96, ALManm01}, \cite{CHEjut86, CHEhmj92}, \cite{DERdcds98}, \cite{NAKamo76} and \cite{NBams11}. For the numerical blow-up rate, there is a series of studies by \cite{FGRmmmas02, AFGRdcds02, FGRnmpde04, GROcomp06}, \cite{HKjcam06} and  \cite{NBams11}. Those papers gave the relation between the discretized problem and the continuous ones. For the numerical convergence of the blow-up time, it was investigated in \cite{ALManm96, ALManm98, USHprims00, ADRcomp02}, \cite{HKjcam06} and  \cite{NBams11}. On numerical blow-up sets, we would like to mention the works in \cite{FGRpams02, GRjcam01, FGRnmpde04} and \cite{ABNjam08}. Up to our knowledge, there are not many papers on the numerical blow-up profile, apart from the paper of Berger and Kohn \cite{BKcpam88} who already obtained  very good numerical results on this subject. There is also the work of Baruch et al. \cite{BFGpd10} studying standing-ring solutions.

For this reason, we will rely on the rescaling method suggested in \cite{BKcpam88} to obtain a numerical solution for the equations mentioned above. This algorithm fundamentally relies on the scale invariance of equation  \eqref{equ:semiHeatEqu} and \eqref{equ:GL}: if $u$ a solution of \eqref{equ:semiHeatEqu} (or \eqref{equ:GL}), then for all $\lambda > 0$, the function $u_\lambda$ given by
\begin{equation}\label{equ:ScaPro}
u_\lambda(\xi, \tau) = \lambda^{\frac{2}{p-1}}u(\lambda \xi, \lambda^2 \tau),
\end{equation}
is also a solution of \eqref{equ:semiHeatEqu} (or \eqref{equ:GL}). This property allows to make a zoom of the solution when it is close to the singularity, still keeping the same equation. Our aim is to give a numerical confirmation for the theoretical profile of the semilinear heat equation  \eqref{equ:semiHeatEqu} in the case $\beta = 0$ (already done in \cite{BKcpam88}) and especially the complex Ginzburg-Landau equation \eqref{equ:GL} which has never been done earlier numerically, and is quite challenging. In the case $\beta \ne 0$ in equation \eqref{equ:semiHeatEqu}, we give a numerical answer to the question of the blow-up profile, where no theoretical is available. This way, our numerical result gives use to new conjecture.\\

The paper is organized as follows: In section \ref{sec:2}, we give some theoretical framework on the study. Section \ref{sec:3} presents the approximation scheme and the rescaling algorithm. The convergence of the numerical solution for problem \eqref{equ:semiHeatEqu} is proved in section \ref{sec:4}. In the last section, we give some numerical experiments to confirm the theoretical results.\\

\noindent \textbf{Acknowledgement:} The author is grateful to L. El Alaoui and H. Zaag for helpful suggestions and remarks during the preparation of this paper.

\section{The theoretical framework} \label{sec:2}

\textbf{Equation \eqref{equ:semiHeatEqu} in case $\beta = 0$}: The existence of blow-up solution for equation \eqref{equ:semiHeatEqu} has been proved by several authors (\cite{Friams65, FUJsut66, LEVarma73, BALjmo77}). We have lots of results concerning the behavior of the solution $u$ of \eqref{equ:semiHeatEqu} at blow-up time, near blow-up points (\cite{GKcpam85, GKiumj87, GKcpam89}, \cite{FKcpam92, FLaihn93},  \cite{HVaihn93, HVasnsp92, HVasnsp92, VELcpde92, VELtams93} and  \cite{MZdm97, MZgfa98}). This study has been done through the introduction for each $a \in \Omega$ ($a$ may be a blow-up point of $u$ or not) the following \emph{similarity variables:} 
\begin{equation}  \label{equ:simiWU}
w_{a,T}(y, s) = (T - t)^{\frac{1}{p-1}}u(x,t), \quad
y = \frac{x - a}{\sqrt{T - t}}, \quad s = -\log{(T -t)},
\end{equation}
and $w_{a,T} = w$ solves a new parabolic equation in $(y,s)$: for all $s\ge -\log{T}$ and $y \in D_{a,s}$, $D_{a,s} = \{y \in \mathbb{R}| a + ye^{-s/2} \in \Omega\}$,
\begin{equation}\label{equ:equinw}
\partial_s w = \Delta w  - \frac{1}{2}y \cdot \nabla w - \frac{w}{p-1} + |w|^{p-1}w.
\end{equation}
Studying solutions of \eqref{equ:semiHeatEqu} near blow-up is therefore equivalent to analyzing large-time asymptotics of solutions of \eqref{equ:equinw}. Each result for $u$ has an equivalent formulation in terms of $w$.\\

\noindent One of the main results which is established in \cite{GKiumj87, GKcpam89} is that $a$ is a blow-up point if and only if 
\begin{equation*}
\lim_{t \to T} (T - t)^{\frac{1}{p-1}}u(a + y \sqrt{T-t}, t) = \pm\kappa,
\end{equation*}
uniformly in $|y| \le C$, where $\kappa = (p-1)^{-\frac{1}{p-1}}$.\\

\noindent In \cite{GPans85, GPans86}, the authors used a formal argument adapted  from \cite{HSBjfm72}  to derive  the ansatz 
\begin{equation}\label{equ:ans}
u(x,t) \sim (T - t)^{-\frac{1}{p-1}} \left(p-1 + \frac{(1-p)^2}{4p}\frac{(x-a)^2}{(T -t)|\log{(T - t)}|}\right)^{-\frac{1}{p-1}}.
\end{equation}
This ansatz has been proved in \cite{VELcpde92, BKnon94, MZgfa98} for some examples of initial data. More precisely, $w$ has a limiting profile in the variable $z = \frac{y}{\sqrt{s}}$ (see \cite{MZdm97, MZgfa98}, \cite{VELcpde92,HVaihn93}), in the sense that 
\begin{equation}\label{equ:preproSLHn}
\sup_{|y| \leq K\sqrt{s}} \left| w(y,s) - f \left(\frac{y}{\sqrt{s}}\right)\right| \to 0 \quad \text{as } s \to +\infty,
\end{equation}
for any $K > 0$, where
\begin{equation}\label{equ:defFshe}
f(z) = \left(p-1 + \frac{(p-1)^2}{4p}|z|^2 \right)^{-\frac{1}{p-1}}.
\end{equation}
The profile \eqref{equ:defFshe} is stable under perturbations of initial data, other profiles are possible but they are suspected to be unstable (see \cite{MZdm97, FZnon00, FMZma00}). Note that Herrero and Vel\'azquez proved the genericity of the behavior \eqref{equ:ans} in  \cite{HVasnsp92} and \cite{HVasps92} in one space dimension. \\

\noindent \textbf{Equation \eqref{equ:semiHeatEqu} in case $\beta \ne 0$}: When $\beta \in (-2,0)$, in \cite{STWiumj96} (see also \cite{SOUmmas96, CWjma89}), the authors proved the existence of a non-trivial backward self-similar solution which blows up in finite time, only at one point and described the asymptotic behavior of its radially symmetric profile. More precisely, they showed the existence of a solution of \eqref{equ:semiHeatEqu} of the form 
\begin{equation}\label{equ:sesi}
u(x,t) = (T - t)^{-\frac{1}{p-1}}v\left(\frac{x}{\sqrt{T-t}}\right),
\end{equation}
where $v$ satisfies for all $\xi \in \mathbb{R}$,
\begin{equation*}
\Delta v(\xi) + \beta |\nabla v(\xi)|^q - \left[\frac{\xi}{2} \cdot \nabla v(\xi) + \frac{1}{p+1}v(\xi) \right] + |v(\xi)|^{p-1}v(\xi) = 0.
\end{equation*}
Note that this type of behavior does not hold when $\beta = 0$. Indeed, from Giga and Kohn \cite{GKcpam85}, we know that the only solutions of the form \eqref{equ:sesi} are $0$ and $\pm \kappa (T - t)^{-\frac{1}{p-1}}$ with $\kappa = (p-1)^{-\frac{1}{p-1}}$. \\

We wonder however whether equation \eqref{equ:semiHeatEqu} has solutions which behave like the solution of the case $\beta = 0$, namely such that, for all $K >0$,
\begin{equation}\label{equ:preproGran}
\sup_{|z|<K}|(T-t)^{1/(p-1)}u(x,t) - \bar{f_\beta}(z)| \to 0, \quad \text{as}\quad  t \to T,
\end{equation}
where $z= \frac{x}{\sqrt{(T-t)|\log(T-t)|}}$,
\begin{equation}\label{equ:defFgra}
\bar{f_\beta}(z) = \left(p-1+b(\beta)|z|^2\right)^{-\frac{1}{p-1}}, \quad \text{with}\quad b(0) = \frac{(p-1)^2}{4p},
\end{equation}
or in \emph{similarity variables} defined in \eqref{equ:simiWU},
\begin{equation}\label{equ:preproFgra}
\sup_{|z|< K}|w(y,s) - \bar{f_\beta}(z)| \to 0, \quad \text{as} \quad s \to \infty.
\end{equation}
Up to our knowledge, there is no theoretical answer to this equation. We answer it positively through a numerical method in this paper (see Section \ref{sec:5.2} below).\\

\noindent\textbf{The complex Ginzburg-Landau equation}: In \cite{MZjfa08}, Masmoudi and Zaag constructed the first solution to equation \eqref{equ:GL} which blows up in finite time $T$ only at one blow-up point and gave a sharp description of its blow-up profile. Furthermore, they showed the stability of that solution with respect to pertubation in initial data. Their result extends the previous result of Zaag \cite{ZAAihn98} done for $\gamma =0$. More precisely, they used the following self-similar transformation of equation \eqref{equ:GL}: 
\begin{equation}\label{equ:tranGL}
w_{a,T}(y, s) = (T - t)^{\frac{1 + \imath \delta}{p-1}}u(x,t), \quad
y = \frac{x - a}{\sqrt{T - t}}, \quad s = -\log{(T -t)},
\end{equation}
and then $w(y,s)$ satisfies the following equation:
\begin{equation*}
\partial_s w = (1 + \imath \gamma)\Delta w  - \frac{1}{2}y \cdot \nabla w - \frac{1 + \imath \delta}{p-1}w + (1 + \imath \delta)|w|^{p-1}w.
\end{equation*}
Their main result is the following: for any $(\delta, \gamma) \in \mathbb{R}^2$ such that $p - \delta^2 - \gamma\delta(p+1) > 0$, equation \eqref{equ:GL} has a solution $u(x,t)$ blowing up in finite time $T$ only at a point $a \in \mathbb{R}$. Moreover, if $w = w_{a,T}$ defined in \eqref{equ:tranGL}, then
\begin{equation}\label{equ:profileGL}
\left\| |s|^{-\imath \mu} w(y,s) - \tilde{f}_{\delta, \gamma}(z) \right\|_{L^\infty} \leq \frac{C}{1 + \sqrt{|s|}}, \quad z = \frac{y}{\sqrt{s}},
\end{equation}
where $\mu =  - \frac{2\gamma b(\delta, \gamma)}{(p-1)^2} (1 + \delta^2), \; b(\delta, \gamma) = \frac{(p-1)^2}{4(p - \delta^2 - \gamma\delta(p+1))}$ and
\begin{equation}\label{equ:funcfpGL}
\tilde{f}_{\delta, \gamma}(z) = \left(p - 1 + b(\delta, \gamma)|z|^2 \right)^{-\frac{1 + \imath \delta}{p-1}}.
\end{equation}
\begin{rema}
We remark that equation \eqref{equ:GL} is rotation invariant. Therefore, $e^{\imath \theta} \tilde{f}_{\delta, \gamma}$ is also an asymptotic profile of the solution of \eqref{equ:GL} with $\theta \in \mathbb{R}$. 
\end{rema}
\begin{rema}
In our paper, we give the first numerical computation of this result. Note that the stability result of \cite{MZjfa08} concerning that solution makes it visible in numerical simulations.
\end{rema}
\section{The numerical method} \label{sec:3}
\noindent In this section, we recall the rescaling algorithm introduced
in \cite{BKcpam88}. 
\subsection{The numerical scheme} 
We first give an Euler approximation of \eqref{equ:semiHeatEqu} and \eqref{equ:GL}. Let $I$ be a positive integer and let us discretize the domain $\Omega = (-1,1)$ by the grid $x_i = ih$ where $-I \leq i \leq I$ and $h = \frac{1}{I}$. Let $\tau > 0$ be a time step and $n \geq 0$ be a positive integer. Then, we set $t_n = n \tau$. In what follows, the lowercase letter denotes the exact values, whereas the capital letter denotes its approximation, for example, we write $u_{i,n} \equiv u(x_i, t_n)$ and $U_{i,n}$ the approximation of $u(x_i, t_n)$. In the following, the notation $\mathbf{U}_n$ stands for $(U_{-I}, \dots, U_0, \dots, U_I)^T$. In addition, we denote 
$$\delta_t U_{i,n} = \frac{U_{i,n+1} - U_{i,n}}{\tau},$$
\begin{equation}
\delta_x U_{i,n} = \frac{U_{i+1,n} - U_{i-1,n}}{2h},\label{equ:cendiff}
\end{equation}
$$\delta_x^2 U_{i,n} = \frac{U_{i-1,n} - 2U_{i,n} + U_{i+1,n}}{h^2}.$$

\noindent\textbf{Discretization of the semilinear heat equation:}\\
The Euler discretization of \eqref{equ:semiHeatEqu} is defined as follows: for $n \geq 0$ and $-I +1 \leq i \leq I-1$,
\begin{equation}\label{equ:sys}
\left\{ \begin{array}{rcll}
\delta_t U_{i,n} &=& \delta^2_x U_{i,n} + \left| U_{i,n}^{p-1}\right|U_{i,n}  + \beta |\delta_x U_{i,n}|^\frac{2p}{p+1},\\
U_{-I, n} &=& U_{I,n} = 0, 
\end{array}
\right.
\end{equation}
with $ U_{i,0} = \phi_i$ where $\phi_i = u_0(x_i)$. Note that $U_{i,n}$ is defined for all $n \geq 0$ and $-I \leq i \leq I$.\\

\noindent\textbf{Discretization of the Ginzburg-Landau equation:}\\
Let us write the solution  of \eqref{equ:GL} as $u = v
+ \imath w$ and $|u| = \sqrt{v^2 + w^2}$. Then \eqref{equ:GL} can be rewritten as follows:  
\begin{equation}\label{equ:sysGL1}
\left\{
\begin{array}{lll}
v_t &=& v_{xx} - \gamma w_{xx}  + \left(v^2 + w^2\right)^\frac{p-1}{2} (v - \delta w)\\
w_t &=& \gamma v_{xx} + w_{xx} + \left(v^2 + w^2\right)^\frac{p-1}{2}  (\delta v + w)\\
v(x,0) &=& \Re \big(u_0(x)\big), \quad w(x,0) = \Im\big(u_0(x)\big).
\end{array}
\right.
\end{equation}
Denote by $V_{i,n}$ and $W_{i,n}$ approximations of
$v(x_i,t_n)$ and $w(x_i,t_n)$ respectively. On setting $\mathbf{V}_n = (V_{-I,n}, \dots, V_{I,n})^T$, $\mathbf{W}_n = (W_{-I,n}, \dots, W_{I,n})^T$, the Euler scheme approximating the
solution of \eqref{equ:sysGL1} is given below: for $n \geq 0$ and $-I +1 \leq i \leq I-1$,
\begin{equation}\label{equ:sysGL}
\left\{ \begin{array}{rcl}
\delta_t V_{i,n} &=& \delta^2_x V_{i,n} - \gamma \delta^2_x W_{i,n} + R_{i,n}(V_{i,n} - \delta W_{i,n}),\\
\delta_t W_{i,n} &=& \gamma\delta^2_x V_{i,n} + \delta^2_x W_{i,n} + R_{i,n}(\delta V_{i,n} + W_{i,n}),\\
V_{-I, n} &=& V_{I,n} = W_{-I, n} = W_{I,n} = 0,\quad 
\end{array}
\right.	
\end{equation}
with $V_{i,0} = \Re\big(\phi_i\big), W_{i,0} = \Im\big(\phi_i\big)$ where $\phi_i = u_0(x_i)$ and $R_{i,n} = (V_{i,n}^2 + W_{i,n}^2)^{\frac{p-1}{2}}$.
\begin{rema} By Taylor expansion, one can show that the central difference approximation given in \eqref{equ:cendiff} is second-order accurate. Therefore, both difference schemes \eqref{equ:sys} and \eqref{equ:sysGL} are first-order accurate in time and second-order in space. 
\end{rema}

\noindent In what follows, let $\mathbf{a} = (a_{-I}, \dots, a_0, \dots, a_I)^T$, denote $\|\mathbf{a}\|_\infty = \max\limits_{i} |a_i|$. We say that $\mathbf{a}$ is positive if each component of $\mathbf{a}$ is positive and write $\mathbf{a} > 0$. Similar notations $\geq, \leq , <$ can be defined.

\subsection{The rescaling method} \label{sec:resMe}
For the sake of clarity, we present the rescaling method in \cite{BKcpam88}, only for the approximation of the semilinear heat equation \eqref{equ:sys}. Straightforward adaptations allos to derive it for the Ginzburg-Landau equation approximated in \eqref{equ:sysGL}. \\
We first introduce some notations: 
\begin{enumerate}[$\diamond$]
\item $\lambda < 1$ is a scaling factor such that $\lambda^{-1}$ is a small positive integer.
\item $M$ is a maximum amplitude before rescaling.
\item $\alpha$ is a parameter controlling the width of the interval to be rescaled.
\item $u^{(k)}(\xi_k, \eta_k)$ is the $k$-th \emph{rescaled solution} defined in space-time variables $(\xi_k, \eta_k)$. If $k = 0$,  $u^{(0)} (\xi_0, \eta_0) \equiv u(x,t), \, (\xi_0, \eta_0) \equiv (x,t)$.
\item $h_k, \tau_k$ denote the space and time step used to approximate $u^{(k)}$.
\item $U^{(k)}_{i,n}$ is an approximation value of $u^{(k)}(\xi_{k,i}, \eta_{k,n})$ where $\xi_{k,i} = i h_k$ 
and $\eta_{k,n} = n \tau_k$.
\end{enumerate}
Let $\{(x_i, t_n , F_{i,n})|  -I \leq i \leq I, 0 \leq n \leq N\}$ be a set of data points, we associate the function $F_{h,\tau}$ which is a piecewise linear approximation in both space and time such that $F_{h,\tau}(x_i,t_n) = F_{i,n}$ and for all $(x,t) \in (x_i, x_{i+1})\times (t_n, t_{n+1})$, 
\begin{align}
F_{h,\tau}(x,t) &= \frac{1}{h\tau}\left[F_{i,n}(x_{i+1} - x)(t_{n+1} - t) + F_{i+1,n}(x - x_i)(t_{n+1} - t) \right] \nonumber \\
& + \frac{1}{h\tau}\left[F_{i,n+1}(x_{i+1} - x)(t - t_n) + F_{i+1,n+1} (x - x_i)(t - t_n)\right].\label{equ:intST}
\end{align}
At some points, we may use the notation $F_{h,n}(x) \equiv F_{h,\tau}(x,t_n)$ for a given $t_n$ and $F_{i,\tau}(t) \equiv F_{h,\tau}(x_i, t)$ for a given $x_i$.\\

\noindent We now recall the rescaling method introduced in \cite{BKcpam88}. \\
The solution of \eqref{equ:sys} is integrated until getting the first time step $\mathbf{n}_0$ such that $\|\mathbf{U}_{\mathbf{n}_0}\|_\infty \geq M$.
Then we find out a value $\tau^*_0$ satisfying 
\begin{equation*}
(\mathbf{n}_0 - 1)\tau \leq \tau_0^* \leq \mathbf{n}_0 \tau \quad \text{and} \quad \|\mathbf{U}_{h,\tau}\left(\cdot,\tau_0^*\right)\|_\infty = M,
\end{equation*}
and two grid points $x_{i_0^-}, x_{i_0^+}$, with $i_0^+, i_0^- \in \{-I, \dots, 0, \dots, I\}$, such that
\begin{equation*}
\left\{ \begin{array}{rcl}
\mathbf{U}_{h,\tau}(x_{i_0^- -1},\tau^*_0) &< \alpha M &\leq \mathbf{U}_{h,\tau}(x_{i_0^-},\tau^*_0),\\
\mathbf{U}_{h,\tau}(x_{i_0^+ +1},\tau^*_0) &< \alpha M &\leq \mathbf{U}_{h,\tau}(x_{i_0^+},\tau^*_0).
\end{array}
\right.
\end{equation*}

\noindent On the interval $(x_{i_0^-}, x_{i_0^+})$  and for $t \geq \tau_0^*$, we refine the mesh by a factor $\lambda$ in space and $\lambda^2$ in time. More precisely, we introduce
\begin{equation*}
u^{(1)}(\xi_1, \eta_1) = \lambda^{\frac{2}{p-1}}u(\lambda \xi_1, \tau_0^* + \lambda^2 \eta_1),
\end{equation*} 
which is also a solution of equation \eqref{equ:semiHeatEqu}, thanks to the scale invariance property stated after \eqref{equ:ScaPro}. From a numerical point of view, it is important to use for $u^{(1)}$ the same discretization  as for $u$. Let $h_1$ be the space discretization step and $\tau_1$ be the time discretization step, then we need to set $h_1 = h$ and $\tau_1 = \tau$ to use the same scheme \eqref{equ:sys} for approximating 
$u^{(1)}$. In other words, the approximation of $u$ on the
interval $(x_{i_0^-}, x_{i_0^+})$ with the steps $\lambda h,
\lambda^2 \tau$ is equivalent to the approximation of
$u^{(1)}$ on the interval $\lambda^{-1}\left(x_{i_0^-},
x_{i_0^+}\right)$ by using $h$ and $\tau$ as
discretization parameters. \\

\noindent Let $I_1 = \lambda^{-1}i_0^+$ and $\mathbf{U}^{(1)}_n =
\left(U^{(1)}_{-I_1,n}, \dots, U^{(1)}_{0,n}, \dots,
U^{(1)}_{I_1,n}\right)^T$ be an approximation of $u^{(1)}$ at time $\eta_{1,n}$. Then, $\mathbf{U}^{(1)}_{n+1}$ solves the following equations: for all $n \geq 0$, $i$ between $-I_1 + 1$ and $I_1 - 1$,
\begin{equation*}
\left\{ \begin{array}{lcl}
\delta_t U^{(1)}_{i,n} &=& \delta^2_x U^{(1)}_{i,n} + \left|U^{(1)}_{i,n}\right|^{p-1}U^{(1)}_{i,n} + \beta \left|\delta_x U^{(1)}_{i,n}\right|^\frac{2p}{p+1},\\
& \\
U^{(1)}_{I_1, n} &=& U^{(1)}_{-I_1, n} = \psi_n^{(1)}, \quad U^{(1)}_{i,0} = \phi_i^{(1)},
\end{array}
\right.
\end{equation*}
where 
\begin{align}
\psi_n^{(1)} & = \lambda^{\frac{2}{p-1}}\mathbf{U}_{h,\tau}(x_{i_0^+}, \tau_0^* + \lambda^2 n \tau), \quad n \geq 0, \label{equ:psi1}\\
\phi_i^{(1)} & = \lambda^{\frac{2}{p-1}}\mathbf{U}_{h,\tau}(\lambda \xi_{1,i}, \tau_0^*), \quad -I_1 \leq i \leq I_1. \label{equ:phi1}
\end{align}
We stop the computation of $\mathbf{U}^{(1)}$ at the first
time level $\eta_{1,\mathbf{n}_1}$ ($\mathbf{n}_1 \geq 1$) such that $\left\|\mathbf{U}^{(1)}_{\mathbf{n}_1} \right\|_\infty \geq M$. After that, we determine $\tau_1^*$
and two grid points $\xi_{1,i_1^-}, \xi_{1,i_1^+}$ where
$i_1^-, i_1^+ \in \{-I_1, \dots, 0, \dots, I_1\}$ by  
\begin{equation*}
\left\{\begin{array}{l}
(\mathbf{n}_1 - 1)\tau_1  \leq \tau_1^* \leq \mathbf{n}_1 \tau_1\\
\left\|\mathbf{U}_{h,\tau}^{(1)}\left(\cdot,\tau_1^*\right)\right\|_\infty = M,
 \end{array} \right. 
 \;\text{and} \;
 \left\{ \begin{array}{rcl}
\mathbf{U}_{h,\tau}^{(1)}(\xi_{1,i_1^- -1},\tau^*_1)&<\alpha M&\leq\mathbf{U}_{h,\tau}^{(1)}(\xi_{1,i_1^-},\tau^*_1),\\
\mathbf{U}_{h,\tau}^{(1)}(\xi_{1,i_1^+ +1},\tau^*_1)&< \alpha M&\leq\mathbf{U}_{h,\tau}^{(1)}(\xi_{1,i_1^+},\tau^*_1).
\end{array}
\right.
\end{equation*}
We remark that the computation of $\mathbf{U}^{(1)}$
requires an initial and a boundary conditions. The initial
data conditions are already obtained by \eqref{equ:phi1}. It remains to
focus on the boundary condition \eqref{equ:psi1}. Both
$\mathbf{U}$ and $\mathbf{U}^{(1)}$ are stepped forward
independently, each on its own grid. A single time step of
$\mathbf{U}$ corresponds to $\lambda^{-2}$ time steps of
$\mathbf{U}^{(1)}$. Therefore, the linear interpolation in
time of $\mathbf{U}$ is used to find the boundary values of
$\mathbf{U}^{(1)}$. After stepping forward
$\mathbf{U}^{(1)}$ $\lambda^{-2}$ times, the values of
$\mathbf{U}$ at grid points on the interval $(x_{i^-_0},
x_{i^+_0})$ are modified to better with the fine grid
solution $\mathbf{U}^{(1)}$. On the interval where
$\mathbf{U}^{(1)} > \alpha M$, the entire procedure is
repeated, yielding $\mathbf{U}^{(2)}$, and so forth.\\ 

The $(k+1)$-st rescaled solution $u^{(k+1)}$ is introduced when $\eta_k$ reaches a value $\tau_k^*$ satisfying
\begin{equation}\label{equ:inittauk}
(\mathbf{n}_k - 1)\tau_k \leq \tau_k^* \leq \mathbf{n}_k \tau_k, \quad \mathbf{n}_k > 0 \quad \text{and} \quad  \left\|\mathbf{U}^{(k)}\left(\cdot,\tau_k^*\right) \right\|_\infty  = M.
\end{equation}
The interval $(\xi_{k,i_k^-}, \xi_{k,i_k^+})$ to be rescaled satisfies
\begin{equation*}
\left\{ \begin{array}{rcl}
\mathbf{U}^{(k)}_{h, \tau} (\xi_{k,i_k^- -1},\tau^*_k) &< \alpha M &\leq \mathbf{U}^{(k)}_{h, \tau} (\xi_{k,i_k^-},\tau^*_k),\\
\mathbf{U}^{(k)}_{h, \tau} (\xi_{k,i_k^+ +1},\tau^*_k) &< \alpha M &\leq \mathbf{U}^{(k)}_{h, \tau} (\xi_{k,i_k^+},\tau^*_k).
\end{array}
\right.
\end{equation*}
The solution $u^{(k+1)}$ is related to $u^{(k)}$ by 
\begin{equation}\label{equ:u_kp1}
u^{(k+1)}(\xi_{k+1}, \eta_{k+1}) = \lambda^{\frac{2}{p-1}}u^{(k)}(\lambda \xi_{k+1}, \tau_k^* + \lambda^2 \eta_{k+1}).
\end{equation}
Let $I_{k+1} = \lambda^{-1}i_{k}^+$ and
$$\mathbf{U}^{(k+1)}_n = \left( U^{(k+1)}_{-I_{k+1},n}, \dots, U^{(k+1)}_{0,n},\dots, U^{(k+1)}_{I_{k+1},n}\right)^T$$
be an approximation of $u^{(k+1)}$ at time $\eta_{k+1,n}$. Then $\mathbf{U}^{(k+1)}_{n+1}$ is a solution of the following equations: for all $n \geq 0$, $i$ between $-I_{k+1} +1$ and $I_{k+1} - 1$,
\begin{equation}\label{equ:sysk}
\left\{ \begin{array}{rcl}
\delta_t U^{(k+1)}_{i,n} &=& \delta^2_x U^{(k+1)}_{i,n} +  \left|U^{(k+1)}_{i,n}\right|^{p-1}U^{(k+1)}_{i,n} + \beta \left|\delta_x U^{(k+1)}_{i,n}\right|^\frac{2p}{p+1},\\
&\\
U^{(k+1)}_{I_k, n} &=& U^{(k+1)}_{-I_k, n}= \psi_n^{(k+1)}, \quad U^{k+1}_{i,0} = \phi_i^{(k+1)},
\end{array}
\right.
\end{equation}
where 
\begin{align}
\psi_n^{(k+1)} & = \lambda^{\frac{2}{p-1}}\mathbf{U}^{(k)}_{h,\tau}(\xi_{k,i_k^+}, \tau_k^* + \lambda^2 n \tau), \quad n \geq 0,\label{equ:psik1}\\
\phi_i^{(k+1)} &= \lambda^{\frac{2}{p-1}}\mathbf{U}^{(k)}_{h,\tau}(\lambda\xi_{k+1,i}, \tau_k^*), \quad -I_{k+1} \leq i \leq I_{k+1}. \label{equ:phik1}
\end{align}

We step forward $\mathbf{U}^{(k+1)}$ on  the interval $\lambda^{-1}(\xi_{k,i_k^-},\xi_{k,i_k^+})$ with the space step $h_{k+1}$ and time step $\tau_{k+1}$. Here, we set $h_{k+1} = h_{k} = \dots = h$ and $\tau_{k+1} = \tau_{k} = \dots = \tau$ to use the same scheme as for $\mathbf{U}^{(k)}, \mathbf{U}^{(k-1)}, \dots ,\mathbf{U}$. The initial data of \eqref{equ:sysk} is given in \eqref{equ:phik1}. For the boundary data of \eqref{equ:sysk}, it is obtained by using the linear interpolation in time of $\mathbf{U}^{(k)}$ given in \eqref{equ:psik1}. Hence, we step forward independently the previous solutions $\mathbf{U}^{(k)}, \mathbf{U}^{(k-1)}, \dots$ each one on its own grid. Previously, $\mathbf{U}^{(k)}$ is stepped forward once every $\lambda^{-2}$ time steps of $\mathbf{U}^{(k+1)}$, $\mathbf{U}^{(k-1)}$ once every $\lambda^{-4}$ time steps of $\mathbf{U}^{(k+1)}, \dots$. After $\lambda^{-2}$ time steps of $\mathbf{U}^{(k+1)}$, the values of $\mathbf{U}^{(k)}$ on the interval which has been refined need to be updated to fit with the calculation of $\mathbf{U}^{(k+1)}$; this is performed on $\mathbf{U}^{(k-1)}$ after $\lambda^{-4}$ time steps of $\mathbf{U}^{(k+1)}$ and so forth. We stop the evolution of $\mathbf{U}^{(k+1)}$ when its amplitude reaches the given threshold $M$ and another rescaling can be performed.\\

To make it clearer, we describe the rescaling method by
the following \emph{algorithm}. Assume that we perform up
to the $K$-th \emph{rescaled solution}.
\begin{itemize}
\item[0.] Set up parameters: $M, \lambda, \alpha, h, \tau, I$.
\item[1.] Initial phase: 
\begin{itemize}
\item Forward $\mathbf{U}$ until $\max_{i}U_i \geq M$.
\item Get the values of $\tau_0^*$ and $x_{i_0^-}, x_{i_0+}$.
\end{itemize}
\item[2.] Iterative phase: set $k = 1$, while $k \leq K$ then
\begin{itemize}
\item[(a)] Define a grid for $\mathbf{U}^{(k)}$ on the interval $\lambda^{-1}\left(\xi_{k-1,i_{k-1}^-}, \xi_{k-1,i_{k-1}^+}\right)$.
\item[(b)] Compute the initial data for $\mathbf{U}^{(k)}$ from $\mathbf{U}^{(k-1)}_{h,\tau}(\cdot, \tau_{k-1}^*)$.
\item[(c)] For $i = 0$ to $k-1$: forward $\mathbf{U}^{(i)}$ one step.
\item[(d)] Set $n = 1$.
\item[(e)] While $\max_{i} U_i^{(k)} < M$ then
\begin{itemize}
\item[+] Forward $\mathbf{U}^{(k)}$ one step.
\item[+] Compute the boundary values of $\mathbf{U}^{(k)}$ from $\mathbf{U}^{(k-1)}_{h,\tau}(\xi_{k-1,i_{k-1}^+}, \tau_{k-1}^* + \lambda n \tau_k)$.
\item[+] For $j = 0$ to $k-1$: if $\mod \left(n , \lambda^{-2(j+1)}\right) = 0 $ then
\begin{itemize}
\item[-] Update $\mathbf{U}^{(k - j - 1)}$ on the interval to be rescaled.
\item[-] Forward $\mathbf{U}^{(k - j - 1)}$ one step.
\end{itemize}
\item[+] Set $n = n+1$.
\end{itemize}
\item[(f)] Get the values of $\tau_{k}^*$ and $\xi_{k,i_k^-}, \xi_{k, i_k^+}$.
\item[(g)] For $i =1$ to $k$: update $\mathbf{U}^{k-i}$.
\item[(i)] Set $k = k +1$, $\mathbf{n}_k = n$ and go to step $(a)$.
\end{itemize}
\end{itemize}

\begin{rema} \label{rema:56}
The value of $M$ should be chosen such that the maximum of the initial data of all rescaled solutions are equal. This means that for all $k \geq 0$,
$$
\lambda^{\frac{2}{p-1}}\|u^{(k)}(\tau_k^*)\|_\infty = \|u_0\|_\infty.
$$
Using the fact that $\|u^{(k)}(\tau_k^*)\|_\infty = M$, it yields that $M =\|u_0\|_\infty \lambda^{-\frac{2}{p-1}}$.
\end{rema}
To end this section, we want to give a definition of the numerical solution $\mathbf{U}_{h, \tau}(x,t)$ of the rescaling method. Let $\sigma>0$ small enough, $h >0$ and $\tau >0$ be the space and time step, then, for each $(x,t)\in[-1,1] \times [0,T-\sigma]$, we can find an integer $K \geq 0$ such that
\begin{equation*}
\mu_{K-1} \leq t < \mu_{K}\quad \text {and} \quad \left\|\mathbf{U}^{(K)}_{h,\tau}\left(\cdot, \lambda^{-2K} \left(t - \mu_{K-1}\right) \right)\right\|_\infty  < M,
\end{equation*}
where $\mu_q :=\sum_{i=0}^{q}\lambda^{2i}\tau^*_i$. Then, $\mathbf{U}_{h,\tau}(x,t)$ is defined as follows:
\begin{equation}\label{equ:defU}\small{
\mathbf{U}_{h,\tau}(x,t) = \left\{ 
\begin{array}{ll}
\lambda^{-\frac{2K}{p-1}}\mathbf{U}_{h,\tau}^{(K)}\left(\lambda^{-K}x, \lambda^{-2K} \left(t - \mu_{K-1}\right)\right) &\text{if $x\in \Omega_K$,}\\

\lambda^{-\frac{2(K-1)}{p-1}}\mathbf{U}_{h,\tau}^{(K-1)}\left(\lambda^{-(K-1)}x, \lambda^{-2(K-1)} \left(t - \mu_{K-2}\right)\right) &\text{if $x\in \Omega_{K-1}\backslash \Omega_K$,}  \\

\vdots & \\

\lambda^{-\frac{2}{p-1}}\mathbf{U}^{(1)}_{h,\tau}\left(\lambda^{-1}x, \lambda^{-2} \left(t - \mu_0\right)\right) &\text{if $x\in \Omega_{1}\backslash \Omega_2$,} \\
\mathbf{U}_{h,\tau}^{(0)}(x,t) &\text{if $x\in \Omega\backslash \Omega_1$.}
\end{array}
\right.}
\end{equation}
where $\Omega_k = (\lambda^k\xi_{k-1,i_{k-1}^-}, \lambda^k\xi_{k-1, i_{k-1}^+})$ for $k \geq 1$ and $\mathbf{U}^{(k)}_{h,\tau}$ is the linear interpolation defined in \eqref{equ:intST}.\\
One can see that the solution defined in \eqref{equ:defU} tends to infinity when $k$ goes to infinity. We say that the solution defined in \eqref{equ:defU} blows up in a finite time if 
\begin{equation}\label{equ:defiT}
T_{h,\tau} = \lim_{K\to +\infty}\sum_{k=0}^{K}\lambda^{2k}\tau_k^* < +\infty.
\end{equation}
The time $T_{h, \tau}$ is call the numerical blow-up time.
\begin{rema} \label{rema:blu}
We can see that $T_{h,\tau}$ defined in \eqref{equ:defiT} is finite if the solution $\mathbf{U}_{h,\tau}^{(k)}$ (defined from $\mathbf{U}_{n}^{(k)}$ by \eqref{equ:intST}) reaches the given threshold $M$ in a bounded number of time steps,  namely when $\bar{\tau} = \sup_{k \geq 0} \tau_k^* < +\infty$. In this case, we see that 
$$T_{h,\tau} \leq  \lim_{K\to +\infty} \bar{\tau}\sum_{k=0}^{K}\lambda^{2k} = \frac{\bar{\tau}}{1 - \lambda^2} < +\infty.$$
\end{rema}

\section{Convergence of the rescaling method} \label{sec:4}
This section is devoted to the convergence analysis of the rescaling method for problem \eqref{equ:semiHeatEqu} with $\beta \in \mathbb{R}$ and $q \in [1, 2)$ not necessarily $q = \frac{2p}{p+1}$, under some regularity assumptions. Note that the discrete problem \eqref{equ:sys} when $\beta = 0$ has already been treated in \cite{NBams11}. When $\beta \ne 0$, proceeding as for $\beta = 0$, the crucial step is to obtain a comparison principle for the discrete problem (see Lemma \ref{lemm:maxprin} below). Note that we could not prove analogous results for the equation \eqref{equ:GL}, since we already have no comparison principle  in the continuous case.
\begin{theo} 
\label{theo:conver}
Consider $h > 0$ sufficiently small and $\tau > 0$ such that $\tau \leq \frac{h^2}{2}$.  Let $\sigma > 0$, suppose that the problem \eqref{equ:semiHeatEqu} (with $q \in[1, 2)$) has a non-negative solution $u(x,t) \in \mathcal{C}^{4,2}\left([-1,1]\times [0,T-\sigma]\right)$ and  the initial data of \eqref{equ:sys} satisfies
\begin{equation*}
\sup_{x \in [-1,1]}|\phi_h(x)-u(x,0)|= \mathcal{O}(h^2) \quad as \quad h \rightarrow 0.
\end{equation*}
\noindent Then the solution $\mathbf{U}_{h,\tau}$ defined in \eqref{equ:defU} satisfies
\begin{equation*}
\sup_{(x,t)\in [-1,1]\times [0, T-\sigma]} |\mathbf{U}_{h,\tau}(x,t)- u(x,t)| = \mathcal{O}\left(h^2\right)  \quad as \quad h \rightarrow 0.
\end{equation*}
\end{theo}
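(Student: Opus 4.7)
The plan is to induct on the rescaling index $k$, using as the base case the classical convergence of the explicit Euler scheme on a smooth bounded solution, and as the inductive engine the propagation of errors through the rescaling prescription \eqref{equ:u_kp1}. Two preliminary ingredients are needed. First, the discrete comparison principle announced as Lemma \ref{lemm:maxprin}: under $\tau\le h^2/2$ and $h$ small enough to absorb the Lipschitz constants of $|U|^{p-1}U$ and $\beta|\delta_x U|^q$ on the range where discrete solutions live, the linearised update of the difference of two discrete solutions has non-negative stencil coefficients, so two solutions with ordered data remain ordered. Second, the consistency of the scheme: on a $\mathcal{C}^{4,2}$ exact solution Taylor expansion gives truncation error $\mathcal{O}(\tau+h^2)=\mathcal{O}(h^2)$. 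Combining both by a standard barrier argument — sandwiching the error between $\pm Ch^2 e^{\mu t}$ — furnishes the base-case estimate: on any interval $[0,T_0]$ where $u$ is $\mathcal{C}^{4,2}$, if the initial and boundary data fed into \eqref{equ:sys} are approximated to $\mathcal{O}(h^2)$, then $\|U_{\cdot,n}-u(\cdot,t_n)\|_\infty=\mathcal{O}(h^2)$, with constants depending only on the $\mathcal{C}^{4,2}$ norm of $u$ and on $T_0$.

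Next I would bound the number of rescalings fitting inside $[0,T-\sigma]$. Since $u\in\mathcal{C}^{4,2}([-1,1]\times[0,T-\sigma])$ has $\|u(t)\|_\infty\le M_\sigma$ there, and (by \eqref{equ:u_kp1} together with the convergence statement itself) the $k$-th rescaling corresponds to $\|u(\mu_k)\|_\infty\approx\lambda^{-2k/(p-1)}M$, one obtains the a priori bound $N(\sigma)\le\frac{p-1}{2\log(1/\lambda)}\log(M_\sigma/M)+1$, independent of $h,\tau$. Additionally the rescaled exact solutions $u^{(k)}(\xi,\eta)=\lambda^{2k/(p-1)}u(\lambda^k\xi,\mu_{k-1}+\lambda^{2k}\eta)$ carry $\mathcal{C}^{4,2}$ bounds whose $j$-th $\xi$-derivative picks up a factor $\lambda^{2k/(p-1)+jk}\le1$, so the consistency and Lipschitz constants at level $k$ do not degrade with $k$; and $\tau_k^*\le\bar\tau:=(T-\sigma)\lambda^{-2N(\sigma)}$ follows from $\mu_{N(\sigma)}\le T-\sigma$.

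Third, the induction itself. Assume inductively that $|\mathbf{U}^{(k-1)}_{h,\tau}(\xi,\eta)-u^{(k-1)}(\xi,\eta)|\le C_{k-1}h^2$ on the whole level-$(k-1)$ domain. By \eqref{equ:phik1}, the initial data for level $k$ satisfies $\phi_i^{(k)}-u^{(k)}(\xi_{k,i},0)=\lambda^{2/(p-1)}[\mathbf{U}^{(k-1)}_{h,\tau}(\lambda\xi_{k,i},\tau_{k-1}^*)-u^{(k-1)}(\lambda\xi_{k,i},\tau_{k-1}^*)]$, which is bounded by $\lambda^{2/(p-1)}C_{k-1}h^2$ plus the $\mathcal{O}(h^2)$ piecewise-bilinear interpolation error \eqref{equ:intST} (the latter controlled by the uniform-in-$k$ derivative bounds just established). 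The same analysis, with time interpolation replacing space interpolation, controls the boundary error \eqref{equ:psik1}. Feeding these into the base-case estimate on $[0,\tau_k^*]\subset[0,\bar\tau]$ yields $C_k\le AC_{k-1}+B$ for universal constants $A,B$, and iterating $N(\sigma)$ times keeps $C_{N(\sigma)}$ bounded independently of $h$. Unfolding the piecewise definition \eqref{equ:defU} then transfers these level-wise estimates back to $(x,t)$-coordinates and gives the claimed uniform $\mathcal{O}(h^2)$ bound on $[-1,1]\times[0,T-\sigma]$.

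The main obstacle will be proving Lemma \ref{lemm:maxprin} itself in the presence of the non-monotone gradient nonlinearity $\beta|\delta_x U|^q$: the stencil coefficients of the linearised-difference update involve terms of the form $\pm\tau\beta q|\delta_x U|^{q-1}/(2h)$, which are controlled only if one has an a priori $L^\infty$ bound on $\delta_x U$. For $q\in[1,2)$ this is achievable by a bootstrap — assume the discrete gradient is within $1$ of $\partial_x u$, prove the resulting error estimate, and close the loop for $h$ small — but the bookkeeping must be repeated at each rescaling level, using essentially the uniform-in-$k$ derivative bounds on $u^{(k)}$ to avoid a degradation of constants as $k$ approaches $N(\sigma)$.
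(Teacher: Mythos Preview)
Your proposal is correct and takes essentially the same approach as the paper: reduce to a single-level estimate (the paper's Proposition~\ref{prop:1}, proved via the discrete comparison principle Lemma~\ref{lemm:maxprin}, Taylor consistency, a barrier function --- the paper uses $e^{At+x^2}(\epsilon_1+\epsilon_2+Qh^2)$ rather than your $Ch^2e^{\mu t}$, but either works --- and the same gradient bootstrap you describe), then apply it recursively across rescaling levels. You are more explicit than the paper about bounding the number of rescalings $N(\sigma)$ and about the uniform-in-$k$ $\mathcal{C}^{4,2}$ control on $u^{(k)}$ via the scaling factors $\lambda^{2k/(p-1)+jk}$; the paper leaves both of these implicit in the single phrase ``recursive application of Proposition~\ref{prop:1}''.
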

\begin{rema}\label{rem:thpr}
The convergence of the rescaling method stated in Theorem \ref{theo:conver} is proved by a recursive application of Proposition \ref{prop:1} below. Therefore, it is enough to give the proof of this proposition.
\end{rema}
\begin{rema} The choice of the central difference approximation for the gradient in \eqref{equ:cendiff} is crucial to get the $\mathcal{O}(h^2)$ convergence in Theorem \ref{theo:conver}.
\end{rema}
\begin{rema} When $\beta \ne 0$ and $q < 2$, we have been unable to show that equation \eqref{equ:semiHeatEqu}  has a $\mathcal{C}^{4,2}$ solution (this is the case when $q = \frac{2p}{p+1}$). On the contrary, when $\beta = 0$ or $q \geq 2$, we do have $\mathcal{C}^{4,2}$ solutions (just take $p \geq 2$ and $u_0 \in \mathcal{C}^4([-1, 1])$, see \ref{ap:A} for a justification of this fact). Hence, our convergence result (Theorem \ref{theo:conver}) is at least meaningful when $\beta = 0$.
\end{rema}
\noindent One can see from the definition of $\mathbf{U}_{h,\tau}$ in \eqref{equ:defU} that $\mathbf{U}_{h,\tau}$ is constructed from $\mathbf{U}^{(k)}_{h,\tau}$ which is the solutions of the problem \eqref{equ:sysk}. It is reasonable then to consider the following problem with the non-zero Dirichlet condition,
\begin{equation}\label{equ:sh1}
\left\{ \begin{array}{rcll}
v_t(x,t) &=& v_{xx}(x,t) + g(v(x,t), v_x(x,t))&\;(x,t)\in (-L,L)\times(0,T),\\
v(-L, t) &=& v(L,t) = v_1(t)  &\quad t \in (0,T),\\
v(x,0)&=& v_0(x) &\quad x \in (-L,L),
\end{array}
\right.
\end{equation}
where $v(t): x \in (-L,L) \mapsto v(x,t) \in \mathbb{R}$, $p> 1$,
$$g(v,v_x) = |v|^{p-1}v + \beta |v_x|^q, \quad \text{with}\quad q = \frac{2p}{p+1}.$$

Let $I > 0$ and consider the grid $x_i = ih$, $-I\leq i \leq I$ where $h = \frac{L}{I}$. Let $\tau > 0$ be a time step and denote $t_n = n\tau$. Let 
$$\mathbf{V}_n= \left( V_{-I,n}, \dots, V_{0,n}, \dots, V_{I,n}\right)^T$$
be the approximation of $v(t_n)$ at grid points. Then, $\mathbf{V}_{n +1}$ is a solution of the following equation: for all $n \geq 0$, $i = -I + 1, \dots, I - 1$,
\begin{equation}\label{equ:sysV}
\left\{ \begin{array}{rcl}
\delta_t V_{i,n} &=& \delta^2_x V_{i,n} + g(V_{i,n}, \delta_x V_{i,n}) \\
V_{-I, n} &=& V_{I,n} = \psi_n, \quad V_{i,0} = \phi_i,
\end{array}
\right.
\end{equation}
where $\psi_n$ and $\phi_i$ stand for $\psi_n^{(k)}$ and $\phi_i^{(k)}$ introduced in \eqref{equ:phi1}, \eqref{equ:phik1}, \eqref{equ:psi1} and \eqref{equ:psik1}.\\

\noindent Let $\mathbf{V}_{h,n}(x)$ be the piecewise linear interpolation generated from $\mathbf{V}_{n}$ by \eqref{equ:intST}, then, we get the following results:

\begin{prop}\label{prop:1} Consider $h > 0$ sufficiently small and $\tau > 0$ such that $\tau \leq \frac{h^2}{2}$. Let $\eta \in (0,T)$, suppose that the problem \eqref{equ:sh1} (with $q \in [1, 2)$) has a non-negative solution $v \in \mathcal{C}^{4,2}([-L,L]\times [0,T-\eta])$, the initial data and boundary data  of \eqref{equ:sysV} satisfy 
\begin{eqnarray*}
\epsilon_1 = \sup_{x \in [-L,L]}|v(x,0) - \phi_h(x)| = o(h) \quad as \quad h \rightarrow 0, \\
\epsilon_2 = \sup_{t \in [0,T - \eta]}|v(L,t) - \psi_\tau(t)\| = o(1) \quad as \quad \tau \rightarrow 0,
\end{eqnarray*}
where $\phi_h$ and $\psi_\tau$ are the interpolations of $\phi_i$ and $\psi_n$ defined in \eqref{equ:intST}.
Then,
\begin{equation*}
\max_{0 \leq n \leq N }\left\|\mathbf{V}_{h,n} - v(t_n) \right\|_{\infty} = \mathcal{O}(\epsilon_1 + \epsilon_2 + h^2) \quad as \quad h \rightarrow 0,
\end{equation*}
where $N > 0$ is such that $t_N = N\tau \leq T-\eta$.
\end{prop}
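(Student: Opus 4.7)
The plan is to prove Proposition \ref{prop:1} by a standard local-truncation/comparison argument, leveraging the discrete maximum principle stated in Lemma \ref{lemm:maxprin}. Introduce the pointwise error $e_{i,n} := V_{i,n} - v(x_i, t_n)$; the goal is the uniform bound $\max_{0 \leq n \leq N} \|e_n\|_\infty \leq C(\epsilon_1 + \epsilon_2 + h^2)$ for $n\tau \leq T-\eta$.

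First I would derive the equation for the error. Substituting $v$ into the scheme \eqref{equ:sysV} and using the $\mathcal{C}^{4,2}$-regularity of $v$, Taylor expansion yields a local truncation error
$$R_{i,n} := \delta_t v(x_i, t_n) - \delta_x^2 v(x_i, t_n) - g\bigl(v(x_i,t_n), \delta_x v(x_i,t_n)\bigr) = \mathcal{O}(h^2 + \tau).$$
The central-difference choice in \eqref{equ:cendiff} is what produces the $\mathcal{O}(h^2)$ accuracy for $v_x$; the local Lipschitz property of $s \mapsto |s|^q$ on bounded sets (valid for $q \geq 1$) lets me absorb $g(v, \delta_x v) - g(v, v_x)$ into $R_{i,n}$ at the same order. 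The CFL condition $\tau \leq h^2/2$ then collapses $R_{i,n}$ to $\mathcal{O}(h^2)$. Subtracting from \eqref{equ:sysV} gives
$$\delta_t e_{i,n} - \delta_x^2 e_{i,n} = g(V_{i,n},\delta_x V_{i,n}) - g(v(x_i,t_n), \delta_x v(x_i,t_n)) - R_{i,n}.$$

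Next I would linearise the nonlinear increment. Since $v, v_x$ are uniformly bounded on $[-L,L]\times[0,T-\eta]$, and as long as $\|V_n\|_\infty$ is controlled (verified by induction on $n$, each inductive step relying on the bound already established for smaller $n$ and on the smallness of $\epsilon_1,\epsilon_2,h$), the local Lipschitz properties of $s\mapsto |s|^{p-1}s$ and $s\mapsto |s|^q$ yield
$$g(V_{i,n},\delta_x V_{i,n}) - g(v(x_i,t_n), \delta_x v(x_i,t_n)) = \alpha_{i,n}\, e_{i,n} + \gamma_{i,n}\, \delta_x e_{i,n},$$
with $|\alpha_{i,n}|, |\gamma_{i,n}| \leq C_*$ uniform in $h,\tau,n,i$. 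I would then construct an explicit super-solution
$$\Phi_{i,n} := A(\epsilon_1+\epsilon_2+h^2)\,e^{B t_n},$$
with $A, B$ large enough that the linearised discrete inequality for $|e|$ is dominated by that for $\Phi$, and such that $\Phi_{i,0}\geq|e_{i,0}|$ and $\Phi_{\pm I, n}\geq|e_{\pm I, n}|$ are forced by the hypotheses on $\epsilon_1, \epsilon_2$. Applying Lemma \ref{lemm:maxprin} to $\Phi \pm e$ then gives $|e_{i,n}| \leq \Phi_{i,n}$ for $n\tau \leq T-\eta$, which is exactly the claim.

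The main obstacle is the gradient nonlinearity $\beta|v_x|^q$. For $q \in [1,2)$ the function $s \mapsto |s|^q$ is only Lipschitz, not $\mathcal{C}^2$, at the origin, and its linearisation produces a discrete drift $\gamma_{i,n}\delta_x e_{i,n}$ whose pointwise size is a priori $\mathcal{O}(h^{-1}\|e_n\|_\infty)$; a naive estimate would not close. The resolution is that the diffusion term $\delta_x^2 e_{i,n}$ scales as $h^{-2}$, so it dominates the drift once $h$ is sufficiently small; equivalently, the monotonicity of the scheme on which Lemma \ref{lemm:maxprin} rests survives addition of a bounded first-order perturbation as soon as $h\,C_* \leq 2$ together with $\tau/h^2 \leq 1/2$. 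These are precisely the smallness assumptions in the statement, and they are what allow the comparison step to close the $\mathcal{O}(\epsilon_1+\epsilon_2+h^2)$ estimate.
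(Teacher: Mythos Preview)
Your proposal is correct and follows essentially the same route as the paper: truncation error via Taylor expansion under the $\mathcal{C}^{4,2}$ hypothesis, mean-value linearisation of the nonlinearity, construction of a barrier, application of Lemma \ref{lemm:maxprin} to $\Phi\pm e$, and a bootstrap to keep $V_{i,n}$ and $\delta_x V_{i,n}$ in the region where the linearisation coefficients stay bounded. The paper differs only cosmetically: it uses the barrier $z(x,t)=e^{At+x^2}(\epsilon_1+\epsilon_2+Qh^2)$ with a spatial weight $e^{x^2}$ rather than your spatially flat $\Phi_{i,n}=A(\epsilon_1+\epsilon_2+h^2)e^{Bt_n}$ (either works, since for a constant-in-space barrier the diffusion and drift terms vanish), and it phrases the bootstrap as ``let $K$ be the largest index with error below $1$, then show $K=N$ by contradiction'' rather than as an induction on $n$. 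One point to make explicit in your write-up: Lemma \ref{lemm:maxprin} requires the zeroth-order coefficient to be \emph{non-negative}, not merely bounded; you should record that $\alpha_{i,n}=p|\xi_{i,n}|^{p-1}\ge 0$, which uses the positivity of $v$ (assumed) and of $V_n$ (Lemma \ref{lem:posSo}).
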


\noindent We now state some properties of the discrete scheme  \eqref{equ:sysV}.
\begin{lemm}\label{lemm:sym}
Let $n = 1, 2, \dots, N$, $\mathbf{V}_n$ be the solution of \eqref{equ:sysV} and $\mathbf{V}_0$ be a symmetric data. Then, $\mathbf{V}_n$ is also symmetric for all $n = 0, 1, \dots N$.
\end{lemm}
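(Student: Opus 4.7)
The plan is to prove the lemma by induction on $n$, the base case $n=0$ being furnished by the hypothesis that $\mathbf{V}_0$ is symmetric. Assuming $V_{-i,n}=V_{i,n}$ for every $-I\le i\le I$, I will establish that the same identity holds at time level $n+1$, separately at interior and boundary indices.

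At the boundary, the Dirichlet condition in \eqref{equ:sysV} forces $V_{-I,n+1}=V_{I,n+1}=\psi_{n+1}$, so symmetry there is automatic. For the interior indices $-I+1\le i\le I-1$, I would rewrite the scheme in explicit form,
\begin{equation*}
V_{i,n+1}=V_{i,n}+\tau\bigl[\delta_x^2 V_{i,n}+|V_{i,n}|^{p-1}V_{i,n}+\beta|\delta_x V_{i,n}|^{q}\bigr],
\end{equation*}
and check that each term on the right-hand side, evaluated at $-i$, coincides with the same term evaluated at $i$. The pointwise piece $|V_{i,n}|^{p-1}V_{i,n}$ is clearly symmetric under the inductive hypothesis. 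The discrete Laplacian is symmetric as well: $\delta_x^2 V_{-i,n}=(V_{-i-1,n}-2V_{-i,n}+V_{-i+1,n})/h^2=(V_{i+1,n}-2V_{i,n}+V_{i-1,n})/h^2=\delta_x^2 V_{i,n}$.

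The only slightly delicate point, which I view as the main thing to check, is the gradient nonlinearity. The central difference \eqref{equ:cendiff} is \emph{antisymmetric} under $i\mapsto -i$ when $\mathbf{V}_n$ is symmetric, since
\begin{equation*}
\delta_x V_{-i,n}=\frac{V_{-i+1,n}-V_{-i-1,n}}{2h}=\frac{V_{i-1,n}-V_{i+1,n}}{2h}=-\delta_x V_{i,n}.
\end{equation*}
The absolute value in $\beta|\delta_x V_{i,n}|^q$ precisely absorbs this sign, yielding $\beta|\delta_x V_{-i,n}|^q=\beta|\delta_x V_{i,n}|^q$. This is exactly where the particular algebraic form of $g$ matters; a non-even power of $v_x$ would break the argument.

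Combining these three observations, the right-hand side of the scheme evaluated at $-i$ equals its value at $i$, so $V_{-i,n+1}=V_{i,n+1}$. Together with the boundary identity, this closes the induction and yields symmetry of $\mathbf{V}_n$ for every $n=0,1,\dots,N$.
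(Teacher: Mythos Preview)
Your proof is correct and follows exactly the approach the paper has in mind: the paper's own proof is simply the one-line remark that the result is ``straightforward from the symmetry of the data and the equation,'' and your induction argument is precisely the spelled-out version of that. Your observation that the central difference is antisymmetric while $|\cdot|^q$ restores symmetry is the only point requiring any care, and you handle it correctly.
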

\begin{proof}
It is straightforward from the symmetry of the data and the equation.
\end{proof}
\begin{rema}\label{rem:sym}
We can consider the problem \eqref{equ:sysV} on the half interval $[0, L]$ from now on. In particular, we have for $n \geq0$ and $i = 1, 2, \dots I-1$,
\begin{align}
\delta_x^2 V_{0,n}& = \frac{2V_{1,n} - 2V_{0,n}}{h^2}, \quad \delta_x^2 V_{i,n} = \frac{V_{i-1,n} - 2V_{i,n} + V_{i+1,n}}{h^2},\nonumber\\
\delta_x V_{0,n} &= 0, \quad \delta_x V_{i,n} = \frac{V_{i+1,n} - V_{i-1,n}}{2h}.\label{equ:deriAt0}
\end{align}
\end{rema}
\begin{rema} The convergence stated in Proposition \ref{prop:1} holds without the symmetric property.  However, we handle only symmetric data to simplify the proofs below.
\end{rema}

\begin{lemm}[\textbf{Positivity of the discrete solution}] \label{lem:posSo}
Let $n = 1, 2, \dots, N$ and $\mathbf{V}_n$ be the solution of \eqref{equ:sysV}. Suppose that $\mathbf{V}_0 \geq 0$ and $V_{I,n} \geq 0$ for $n = 0, 1, \dots, N$. Assume in addition that $\tau \leq \frac{h^2}{2}$ and $h \leq \left(\frac{2^q}{|\beta|M_0^{q - 1}} \right)^\frac{1}{2 - q}$ if $\beta < 0$, where $M_0 = \max \limits_{0 \leq n \leq N} \|\mathbf{V}_{n}\|_\infty$. Then, $\mathbf{V}_{n} \geq 0$ for all $n$ between $0$ and $N$. 
\end{lemm}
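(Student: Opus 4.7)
The proof will proceed by induction on the time index $n$. The base case is immediate: $\mathbf{V}_0 \geq 0$ by hypothesis. For the inductive step, assume $\mathbf{V}_n \geq 0$ and prove $\mathbf{V}_{n+1} \geq 0$. The boundary nodes are easy: $V_{\pm I, n+1} = \psi_{n+1} \geq 0$ by assumption. So the work is entirely at the interior nodes $i = -I+1,\dots,I-1$.

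The plan is to solve the scheme \eqref{equ:sysV} explicitly for $V_{i,n+1}$, rewriting it as
\begin{equation*}
V_{i,n+1} = \frac{\tau}{h^2}V_{i-1,n} + \Bigl(1-\frac{2\tau}{h^2}\Bigr)V_{i,n} + \frac{\tau}{h^2}V_{i+1,n} + \tau V_{i,n}^{p} + \tau\beta\bigl|\delta_x V_{i,n}\bigr|^{q}.
\end{equation*}
The CFL condition $\tau \leq h^2/2$ makes the coefficient of $V_{i,n}$ non-negative, so the first three terms form a non-negative convex combination of $V_{i-1,n}, V_{i,n}, V_{i+1,n}$ by the induction hypothesis; moreover $V_{i,n}^p = |V_{i,n}|^{p-1}V_{i,n}\geq 0$. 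If $\beta \geq 0$, every term is non-negative and we conclude immediately.

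The crux is the case $\beta < 0$, where the gradient term is negative and must be absorbed into the discrete diffusion. Using $V_{i\pm 1,n}\geq 0$, I bound
\begin{equation*}
|\delta_x V_{i,n}|^q = \Bigl|\frac{V_{i+1,n}-V_{i-1,n}}{2h}\Bigr|^q \leq \Bigl(\frac{V_{i+1,n}+V_{i-1,n}}{2h}\Bigr)^q \leq \frac{M_0^{q-1}}{2h^{q}}\bigl(V_{i+1,n}+V_{i-1,n}\bigr),
\end{equation*}
where the last inequality uses $q\in[1,2)$, the bound $V_{i\pm 1,n}\leq M_0$, and the elementary estimate $(a+b)^q \leq (2M_0)^{q-1}(a+b)$ valid when $a+b\leq 2M_0$. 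Substituting, the diffusive plus gradient contribution becomes
\begin{equation*}
\frac{\tau}{h^2}(V_{i-1,n}+V_{i+1,n}) + \tau\beta|\delta_x V_{i,n}|^{q} \geq \tau(V_{i-1,n}+V_{i+1,n})\Bigl(\frac{1}{h^2}-\frac{|\beta|M_0^{q-1}}{2h^{q}}\Bigr),
\end{equation*}
which is non-negative precisely when $h^{2-q} \leq 2/(|\beta|M_0^{q-1})$, and this is implied by the smallness assumption on $h$ in the statement. Combining with the already-handled non-negative terms gives $V_{i,n+1}\geq 0$, closing the induction.

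The main obstacle is the $\beta<0$ case: one must convert the pointwise gradient term into something of the same structure as the discrete Laplacian so that the smallness of $h$ can compensate. The key technical point is the chain of elementary inequalities above that replaces $|\delta_x V_{i,n}|^q$ by a multiple of $V_{i+1,n}+V_{i-1,n}$; this is where both $q\geq 1$ (to exploit the $L^\infty$ bound $M_0$) and the induction hypothesis $V_{i\pm 1,n}\geq 0$ (so that $|V_{i+1}-V_{i-1}|\leq V_{i+1}+V_{i-1}$) are used simultaneously. The rest is a direct comparison of powers of $h$.
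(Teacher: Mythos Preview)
Your approach is the same as the paper's: induction on $n$, explicit update formula, CFL handles the diagonal coefficient, and for $\beta<0$ you absorb the gradient term into the discrete diffusion. There is, however, a quantitative slip in your chain of inequalities. You bound
\[
|V_{i+1,n}-V_{i-1,n}|^{q}\leq (V_{i+1,n}+V_{i-1,n})^{q}\leq (2M_0)^{q-1}(V_{i+1,n}+V_{i-1,n}),
\]
which after dividing by $(2h)^q$ gives the constant $M_0^{q-1}/(2h^{q})$ and hence the requirement $h^{2-q}\leq 2/(|\beta|M_0^{q-1})$. This is \emph{stronger} than the hypothesis $h^{2-q}\leq 2^{q}/(|\beta|M_0^{q-1})$ in the lemma (since $q\geq 1$ implies $2\leq 2^{q}$), so your claim ``this is implied by the smallness assumption on $h$ in the statement'' is not correct. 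The paper uses instead the sharper first step
\[
|V_{i+1,n}-V_{i-1,n}|^{q}\leq V_{i+1,n}^{q}+V_{i-1,n}^{q}
\]
(valid because both entries are non-negative, so $|a-b|\leq\max(a,b)$ and $\max(a,b)^{q}\leq a^{q}+b^{q}$), followed by $V_{i\pm1,n}^{q}\leq M_0^{q-1}V_{i\pm1,n}$. This produces the constant $M_0^{q-1}/(2^{q}h^{q})$ and matches the stated hypothesis exactly. With this one-line replacement your argument goes through verbatim.
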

\begin{proof} By induction, we assume that $\mathbf{V}_k \geq 0$ for all $k = 0, 1, \dots, n$. We need to show that $\mathbf{V}_{n+1} \geq 0$. Using \eqref{equ:sysV}, we see that 
$$
V_{0,n+1} = \left(1 - \frac{2\tau}{h^2}\right)V_{0,n} + \frac{2\tau}{h^2}V_{1,n} + \tau V_{0,n}^p,
$$
where we used the fact that $\delta_x V_{0,n} = 0$ from Remark \ref{rem:sym}. From the restriction $\tau \leq \frac{h^2}{2}$, we have $V_{0,n+1} \geq 0$.\\
For $i$ between $1$ and $I-1$, we have
$$
V_{i,n+1} = \left(1 - \frac{2\tau}{h^2}\right)V_{i,n} + \frac{\tau}{h^2}\left(V_{i+1,n} + V_{i-1,n}\right) + \tau V_{i,n}^p + \frac{\tau\beta}{(2h)^q}\left| V_{i+1,n} - V_{i-1,n}\right|^q.
$$
If $\beta \geq 0$ and $\tau \leq \frac{h^2}{2}$, we directly infer the desired result. If $\beta < 0$, we have for $i = 1, \dots, I-1$,
\begin{align*}
 V_{i,n+1} &\geq \left(1 - \frac{2\tau}{h^2}\right)V_{i,n} + \frac{\tau}{h^2}\left(V_{i+1,n} + V_{i-1,n}\right) + \tau V_{i,n}^p - \frac{\tau|\beta|}{(2h)^q}\left(V_{i+1,n}^q + V_{i-1,n}^q\right)\\
 & = \left(1 - \frac{2\tau}{h^2}\right)V_{i,n} + \tau V_{i,n}^p + \frac{\tau}{h^q}\left(\frac{1}{h^{2 - q}} - \frac{|\beta|}{2^q}V_{i+1,n}^{q - 1} \right)V_{i+ 1,n}\\
 &\qquad \qquad \qquad \qquad + \frac{\tau}{h^q}\left(\frac{1}{h^{2 - q}} - \frac{|\beta|}{2^q}V_{i-1,n}^{q - 1} \right)V_{i- 1,n}. 
\end{align*}
Here we used the induction assumption that $V_{i,n} \geq 0$ for $i = 0, 1, \dots, I$. To obtain $V_{i,n+1} > 0$, then it requires the following restrictions
$$ \frac{\tau}{h^2} \leq \frac{1}{2} \quad \text{and} \quad \frac{1}{h^{2 - q}} - \frac{|\beta|}{2^q}M_0^{q-1} \geq 0.$$
Recall that $q \in [1,2)$, then the last condition yields $h \leq \left(\frac{2^q}{|\beta|M_0^{q - 1}} \right)^\frac{1}{2 - q}$. This ends the proof of Lemma \ref{lem:posSo}.
\end{proof}

\noindent The following lemma is a discrete version of the maximum principle.
\begin{lemm} \label{lemm:maxprin}
Let $\mathbf{b}_n = (b_{0,n}, b_{1,n}, \dots, b_{I,n})^T, \mathbf{c}_n = (c_{0,n}, c_{1,n}, \dots, c_{I,n})^T$ be two vectors such that $\mathbf{b}_n \geq 0$ and $\mathbf{c}_n$ is bounded. Let $\mathbf{V}_n = (V_{0,n}, V_{1,n}, \dots, V_{I,n})^T$ satisfy
\begin{eqnarray*} 
\delta_t V_{i,n} - \delta^2_x V_{i,n} - b_{i,n}V_{i,n} - c_{i,n}\delta_xV_{i,n} &\geq& 0, \quad 0\leq i\leq I-1,\\
V_{i,0} &\geq& 0, \quad 0\leq i\leq I,\\
V_{I,n} &\geq& 0, \quad n \geq 0.
\end{eqnarray*}
If $\tau \leq \frac{h^2}{2}$ and $h \leq \frac{2}{\|\mathbf{c}_n\|_{\infty}}$, then $\mathbf{V}_n \geq 0$ for all $n \geq 0$ .
\end{lemm}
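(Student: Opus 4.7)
The plan is to proceed by induction on $n$, showing that the CFL-type restriction $\tau \le h^2/2$ together with the ``upwind'' mesh constraint $h \le 2/\|\mathbf{c}_n\|_\infty$ forces the one-step update to be a convex combination (with a nonnegative lower order term) of the nonnegative values $V_{i-1,n}, V_{i,n}, V_{i+1,n}$. The base case $n=0$ is given, so it suffices to prove that $\mathbf{V}_n \ge 0$ implies $\mathbf{V}_{n+1}\ge 0$.

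For the inductive step, I would first rewrite the differential inequality in explicit form, namely
\begin{equation*}
V_{i,n+1} \;\ge\; V_{i,n} + \tau\,\delta_x^2 V_{i,n} + \tau b_{i,n} V_{i,n} + \tau c_{i,n}\,\delta_x V_{i,n},\qquad 0\le i\le I-1,
\end{equation*}
and then expand the discrete derivatives. For an interior index $1\le i\le I-1$ this gives
\begin{equation*}
V_{i,n+1} \;\ge\; \Bigl(1-\tfrac{2\tau}{h^2}+\tau b_{i,n}\Bigr)V_{i,n} + \Bigl(\tfrac{\tau}{h^2}-\tfrac{\tau c_{i,n}}{2h}\Bigr)V_{i-1,n} + \Bigl(\tfrac{\tau}{h^2}+\tfrac{\tau c_{i,n}}{2h}\Bigr)V_{i+1,n}.
\end{equation*}
Under $\tau \le h^2/2$ the coefficient of $V_{i,n}$ is nonnegative (and $b_{i,n}\ge 0$ only helps), while the coefficients of $V_{i\pm1,n}$ are nonnegative exactly when $h\,|c_{i,n}|\le 2$, i.e.\ under the assumption $h\le 2/\|\mathbf{c}_n\|_\infty$. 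Combined with the inductive hypothesis, this yields $V_{i,n+1}\ge 0$.

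For the boundary index $i=0$, I would invoke Remark \ref{rem:sym}: since we work on the half interval, $\delta_x V_{0,n}=0$ and $\delta_x^2 V_{0,n} = 2(V_{1,n}-V_{0,n})/h^2$, so the inequality reduces to
\begin{equation*}
V_{0,n+1} \;\ge\; \Bigl(1-\tfrac{2\tau}{h^2}+\tau b_{0,n}\Bigr)V_{0,n} + \tfrac{2\tau}{h^2}V_{1,n},
\end{equation*}
which is again a nonnegative combination under $\tau\le h^2/2$. The remaining endpoint $i=I$ is handled directly by the assumption $V_{I,n+1}\ge 0$. Closing the induction completes the proof.

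There is no real obstacle here: the argument is entirely elementary, and the two hypotheses $\tau\le h^2/2$ and $h\le 2/\|\mathbf{c}_n\|_\infty$ are precisely the ones required for nonnegativity of the three coefficients appearing in the explicit scheme. The only subtlety worth flagging is the role of Lemma \ref{lemm:sym} and Remark \ref{rem:sym}, which justify reducing to the half interval and treating $i=0$ as a reflecting (Neumann) point rather than a Dirichlet one; without this reduction the statement would need an extra hypothesis on $V_{0,n}$.
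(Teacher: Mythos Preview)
Your proof is correct and follows essentially the same route as the paper's: an induction on $n$, an explicit expansion of the scheme at $i=0$ (using the symmetric formulas of Remark~\ref{rem:sym}) and at interior indices $1\le i\le I-1$, and the observation that the three coefficients are nonnegative precisely under $\tau\le h^2/2$ and $h\le 2/\|\mathbf{c}_n\|_\infty$. The only cosmetic difference is that you spell out the role of the endpoint $i=I$ and the symmetry reduction more explicitly than the paper does.
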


\begin{rema}
Note that as before, we handle symmetric data in this lemma. That is the reason why we focus only on $i \geq 0$. Note also that \eqref{equ:deriAt0} is useful for this lemma.
\end{rema}
\begin{proof}[\textbf{Proof of Lemma \ref{lemm:maxprin}}] We proof this lemma by induction. Assume that $\mathbf{V}_{k} \geq 0$ for $k = 0, 1, \dots, n$. Let us show that $\mathbf{V}_{n+1} \geq 0$. A straightforward calculation yields 
$$V_{0,n+1} \geq \frac{2\tau}{h^2} V_{1,n} + \left(1 - \frac{2\tau}{h^2}  \right)V_{0,n} + \tau b_{0,n}V_{0,n},$$
for $i$ between $1$ and $I-1$, we have
\begin{align*}
V_{i,n+1} &\geq \left(\frac{\tau}{h^2} - \frac{\tau}{2h}c_{i,n}\right) V_{i-1,n} + \left(1 - \frac{2\tau}{h^2}  \right)V_{i,n}\\
&\quad + \tau b_{i,n}V_{i,n} + \left(\frac{\tau}{h^2} + \frac{\tau}{2h}c_{i,n}\right) V_{i+1,n}.
\end{align*}
\noindent Since $\tau \leq \frac{h^2}{2}$, $h \leq \frac{2}{\|\mathbf{c}_n\|_\infty}$ and $\mathbf{b}_{n}, \mathbf{V}_{n}$ are non-negative, we deduce that $\mathbf{V}_{n+1} \geq 0$. This ends the proof.
\end{proof}
Let us now give the proof of Proposition \ref{prop:1}.
\begin{proof}[\textbf{Proof of Proposition \ref{prop:1}}] Under the hypothesis stated in Proposition \ref{prop:1}, we see that if $h$ is small enough, we may consider $K \leq N$ be the greatest value such that for all $n < K$,
\begin{equation}\label{equ:conVKN}
\max_{0 \leq i \leq I} |V_{i,n} - v(x_i,t_n) | < 1 \; \text{and} \; \max_{0 \leq i \leq I-1} \big||\delta_x V_{i,n}| - |\delta_x v(x_i,t_n)| \big| < 1.
\end{equation}
From the the fact that $v_0 \geq 0$ and Lemma \ref{lem:posSo}, we see that the solution of \eqref{equ:sysV} is non-negative. Furthermore, since $v(x,t) \in C^{4,2}\left([-L,L]\times [0, T-\eta]\right)$, there exist positive constants $C_1$ and $C_2$ such that for all $(x,t) \in [-L,L]\times [0, T-\eta]$,
$$|v_x^{(i)}(x,t)| \leq C_1, \quad 0 \leq i \leq 4 \quad \text{and} \quad |v_t^{(j)}(x,t)| \leq C_2, \quad 0 \leq j \leq 2.$$
Thus, we obtain from the triangle inequality that 
$$\max_{0 \leq i \leq I} |V_{i,n}| \leq 1 + C_1 \quad \text{and} \max_{0 \leq i \leq I-1} |\delta_x V_{i,n}| \leq 1 + C_2, \quad \text{for } \quad n < K. $$
Using Taylor's expansion and \eqref{equ:sh1}, we derive for all $1 \leq i \leq I-1, 0 < n < K$,
\begin{equation*}\label{equ:tay1}
\delta_t v(x_i, t_n) \leq \delta_x^2 v(x_i, t_n) + v^p(x_i, t_n) + \beta |\delta_x v(x_i,t_n)|^q+ C_3h^2 + C_4\tau,
\end{equation*}
where $C_3, C_4$ are positive constants.\\

\noindent Let $e_{i,n} = V_{i,n} - v(x_i, t_n)$ be the discretization error. We have, 
\begin{equation*}\label{equ:rea3}
\delta_t e_{i,n} \leq \delta_x^2 e_{i,n} + V_{i,n}^p - v^p(x_i,t_n) + \beta (|\delta_x V_{i,n}|^q - |\delta_x v(x_i,t_n)|^q) +  C_3h^2 + C_4\tau.
\end{equation*}
Applying the mean value theorem, we get
\begin{equation*}
\delta_t e_{i,n} \leq \delta_x^2 e_{i,n} + p\xi_{i,n}^{p-1}e_{i,n} + \beta q |\theta^{q-2}_{i,n}|\theta_{i,n}\delta_x e_{i,n} + C_3h^2 + C_4\tau,
\end{equation*}
where $\xi_{i,n}$ is an intermediate value between $V_{i,n}$ and $v(x_i, t_n)$, $\theta_{i,n}$ is between $\delta_x V_{i,n}$ and $\delta_{x}v(x_i,t_n)$.  \\
Since $\tau \leq \frac{h^2}{2}$, we then obtain for all $i\leq I-1$ and $n < K$,
\begin{equation}\label{equ:reaEr1}
\delta_t e_{i,n} \leq \delta_x^2 e_{i,n} + p\xi_{i,n}^{p-1} e_{i,n} +\beta q |\theta^{q-2}_{i,n}|\theta_{i,n}\delta_x e_{i,n} + C_5h^2,
\end{equation}
where $C_5 = C_3 + C_4/2$.\\
We now consider the function 
$$z(x,t) = e^{At + x^2} \left( \epsilon_1 + \epsilon_2 + Qh^2 \right),$$
where $A, Q$ are positive constants which will be chosen later.\\
We observe that, for $0 \leq i \leq I$,
\begin{align*} 
z(x_i,0) &= e^{x_i^2}  \left(\epsilon_1 + \epsilon_2 + Qh^2\right)
\geq e_{i,0}, \\ 
z(x_{I}, t_n) &= e^{At_n + x_I^2}\left( \epsilon_1 + \epsilon_2 + Q h^2\right) \geq e_{I,n},
\end{align*}
and 
\begin{align*}
z_t(x,t) - z_{xx}(x,t) & - p\xi_{i,n}^{p-1} z(x,t) - \beta q |\theta^{q-2}_{i,n}|\theta_{i,n} z_x(x,t)\\
&=(A - 2 - p\xi_{i,n}^{p-1} - 4x^2 - 2 \beta q |\theta^{q-2}_{i,n}|\theta_{i,n} x)z(x,t).
\end{align*}
Using Taylor's expansion, we get
\begin{align*}
\delta_t z(x_i,t_n) &- \delta_x^2 z(x_i,t_n) - p\xi_{i,n}^{p-1} z(x_i,t_n) - \beta q |\theta^{q-2}_{i,n}|\theta_{i,n} \delta_x z(x_i,t_n)\\
&= (A - 2 - p\xi_{i,n}^{p-1} - 4x_i^2 - 2\beta q |\theta^{q-2}_{i,n}|\theta_{i,n} x_i)z(x_i,t_n) \\
& \qquad + \frac{h^2}{12}z_{xxxx}(\tilde{x}_i,t_n) + \beta q |\theta^{q-2}_{i,n}|\theta_{i,n}\frac{h^2}{6}z_{xxx}(\bar{x}_i, t_n)- \frac{\tau}{2}z_{tt}(x_i,\tilde{t}_n),
\end{align*}
where $\tilde{t}_n \in [t_n, t_{n+1}]$ and $\tilde{x}_i, \bar{x}_i \in [x_{i-1}, x_{i+1}]$.\\
By taking $A, Q$ large enough, then $h$ small enough such that the right-hand side of the above equation is lager than $C_5 h^2$, we obtain
\begin{equation}\label{equ:reZZ}
\delta_t z(x_i,t_n) - \delta_x^2 z(x_i,t_n)  - p\xi_{i,n}^{p-1} z(x_i,t_n) - \beta q |\theta^{q-2}_{i,n}|\theta_{i,n} \delta_x z(x_i,t_n) \geq C_5 h^2.
\end{equation}
From \eqref{equ:reaEr1} and \eqref{equ:reZZ}, applying Lemma \ref{lemm:maxprin} to $z(x_i,t_n) - e_{i,n}$ with $b_{i,n} = p\xi_{i,n}^{p-1} \geq 0$ and $c_{i,n} = \beta q |\theta^{q-2}_{i,n}|\theta_{i,n}$ bounded, we get $e_{i,n} \leq z(x_i,t_n)$ for $0\leq i \leq I$ and $0 \leq n < K$. By the same way, we also show that $-e_{i,n} \leq z(x_i,t_n)$ for $0 \leq i \leq I$ and $0 \leq n \leq K$. In conclusion, we derive
\begin{equation*}
\max_{0 \leq i \leq I} |V_{i,n} - v(x_i, t_n)| \leq z(x_i,t_n) \leq e^{AT + L^2}(\epsilon_1 + \epsilon_2 + Qh^2), \quad\text{for} \quad  n < K.
\end{equation*}

\noindent Let us show that $K = N$. Assuming by contradiction that $K < N$, we have
\begin{equation*}
1 \leq \max_{0\leq i \leq I}|V_{i,K} - v(x_i, t_K)| \leq z(x_i,t_K) \leq e^{AT + L^2}(\epsilon_1 + \epsilon_2 + Qh^2).
\end{equation*}
But this contradicts with the fact that the last term in the above inequality tends to zero as $h$ tends to zero. This concludes the proof of Proposition \ref{prop:1}. Since Theorem \ref{theo:conver} is a consequence of Proposition \ref{prop:1}, as we pointed in Remark \ref{rem:thpr}, this is also the conclusion of the proof of Theorem \ref{theo:conver}.
\end{proof}
\section{Numerical results} \label{sec:6}
The numerical experiments presented in this section are performed with the initial data 
\begin{equation}\label{equ:dataEx}
u_0(x) = A\left(1 + \cos(\pi x)\right),\quad x \in (-1,1),
\end{equation}
where $A = 1.2$. For the non-linearity power, we take $p =5$ and $p=7$. Let us recall from Remark \ref{rema:56} that the threshold $M$ is given by $M= \lambda^{-\frac{2}{p-1}}\|u_0\|_\infty$. Therefore, the parameters of the algorithm are $\lambda = \frac{1}{2}$,  $\alpha = 0.4$, $M = 2A \times 2^{1/2}$ if $p =5$ and $M = 2A \times 2^{1/3}$ if $p=7$. For the time step $\tau$, we take $\tau = \frac{h^2}{4}$ where $h = \frac{2}{I}$. We perform the experiments with $I = 50, 100, 160, 250, 320, 400$.

\subsection{The semilinear heat equation ($\beta =0$ in equation \eqref{equ:semiHeatEqu}).} \label{sec:anaSHE}
Note that the original paper of Berger and Kohn \cite{BKcpam88} was totally devoted to this case. We now recall the  assertion that the value $\tau_k^*$ is independent of $k$ and tends to a constant as $k$ tends to infinity. In order to  establish this assertion, we recall from Merle and Zaag \cite{MZcpam98} that 
\begin{equation}\label{equ:limUtT}
\lim_{t \to T} (T - t)^{\frac{1}{p-1}}\|u(t)\|_\infty = \kappa, \quad \text{with}\quad \kappa = (p-1)^{-\frac{1}{p-1}}.
\end{equation}
Then, using \eqref{equ:u_kp1} we see that
\begin{equation}\label{equ:relau_uk}
u^{(k)}(\xi_k,\tau_k^*) = \lambda^{\frac{2}{p-1}}u^{(k-1)}(\lambda \xi_k, \tau^*_{k-1} + \lambda^2 \tau_k^*)= \dots = \lambda^\frac{2k}{p-1}u(\lambda^k \xi_k, t_k),
\end{equation}
where $t_k = \tau_0^* + \lambda^2\tau_1^* + \dots + \lambda^{2k}\tau_k^*.$\\
Hence, it holds that 
$$(T - t_k)^\frac{1}{p-1}\|u(t_k)\|_\infty = (T - t_k)^\frac{1}{p-1} \lambda^\frac{-2k}{p-1}\|u^{(k)}(\tau_k^*)\|_\infty.$$
Since $\|u^{(k)}(\tau_k^*)\|_\infty = M$, we obtain 
\begin{equation}\label{equ:T_tk}
T -t_k = \lambda^{2k}M^{1-p}(p-1)^{-1} + o(1) \quad \text{as} \quad k \to \infty
\end{equation}
on the one hand.\\

On the other hand, we get 
\begin{align*}
\tau_k^* & = \lambda^{-2k}(t_k - t_{k-1}) = \lambda^{-2k}\left((T - t_{k-1}) - (T - t_k) \right)\\
& = M^{1-p}(p-1)^{-1}(\lambda^{-2} - 1) + o(1).
\end{align*}
Consequently, we obtain 
\begin{equation}\label{equ:limintoftauk}
\lim_{k \to +\infty} \tau_k^* = M^{1-p}(p-1)^{-1}(\lambda^{-2} - 1).
\end{equation}

Figure \ref{fig:1p5} presents the computed values of $\tau_k^*$ when $p=5$, for different values of $I$. The values of $\tau_k^*$  are tabulated in Tables \ref{tab:1p5} and \ref{tab:1p7} for some selected values of $k$. These experimental results are in agreement with the fact that $\tau_k^*$ tends to the constant indicated in the right-hand side of \eqref{equ:limintoftauk} as  $k$ tends to infinity.
\begin{figure}[!htbp]
\begin{center}
\includegraphics[scale = 0.3]{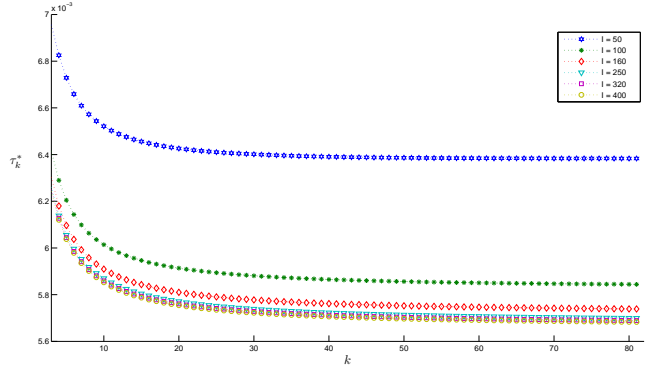}
\caption{The computed values of $\tau_k^*$ are plotted against $k$ when $p=5$.}
\label{fig:1p5}
\end{center}
\end{figure}
\begin{table}[!htbp]
\begin{center}
\tabcolsep = 3mm
\begin{tabular}{ccccccc}
\hline
$k$ & I = 50 & I = 100 & I = 160 & I = 250 & I = 320 & I = 400 \\
\hline
20 &0.6426 &0.5913 &0.5810 &0.5771 &0.5760 &0.5755 \\
30 &0.6401 &0.5881 &0.5778 &0.5739 &0.5728 &0.5722 \\
40 &0.6391 &0.5865 &0.5762 &0.5723 &0.5712 &0.5706 \\
50 &0.6386 &0.5856 &0.5752 &0.5713 &0.5703 &0.5697 \\
60 &0.6384 &0.5851 &0.5746 &0.5707 &0.5697 &0.5691 \\
70 &0.6384 &0.5847 &0.5742 &0.5703 &0.5693 &0.5687 \\
80 &0.6383 &0.5844 &0.5739 &0.5700 &0.5689 &0.5683 \\
\hline
\end{tabular}
\end{center} 
\caption{The computed values of $\tau_k^* (\times 10^{-2})$ when $p=5$.}
\label{tab:1p5}
\end{table}
\begin{table}[!htbp]
\begin{center}
\tabcolsep = 3mm
\begin{tabular}{ccccccc}
\hline
$k$ & I = 50 & I = 100 & I = 160 & I = 250 & I = 320 & I = 400 \\
\hline
20 &0.1279 &0.0826 &0.0726 &0.0688 &0.0677 &0.0671 \\
30 &0.1279 &0.0825 &0.0724 &0.0686 &0.0675 &0.0670 \\
40 &0.1279 &0.0825 &0.0724 &0.0685 &0.0674 &0.0669 \\
50 &0.1279 &0.0825 &0.0723 &0.0684 &0.0674 &0.0668 \\
60 &0.1279 &0.0825 &0.0723 &0.0684 &0.0673 &0.0667 \\
70 &0.1279 &0.0825 &0.0723 &0.0683 &0.0673 &0.0667 \\
80 &0.1279 &0.0825 &0.0723 &0.0683 &0.0673 &0.0667 \\
\hline
\end{tabular}
\end{center} 
\caption{The computed values of $\tau_k^*(\times 10^{-2})$ when $p=7$.}
\label{tab:1p7}
\end{table}
In Figure \ref{fig:2blrate}, we show the plot of $\left\|\mathbf{U}_{h,\tau}(t)\right\|_{\infty}$ versus $(T_{h,\tau} - t)$ in log-scale where $T_{h,\tau}$ is given by $T_{h,\tau} = \sum_{k = 0}^K \lambda^{2k}\tau_k^*$.
The slope of the obtained curves measures the blow-up rate. As expected from \eqref{equ:limUtT}, these slopes for $p = 5$ and $p=7$ are $\frac{1}{4}$ and $\frac{1}{6}$ respectively.
\begin{figure}[!htbp]
\begin{center}
\includegraphics[scale = 0.3]{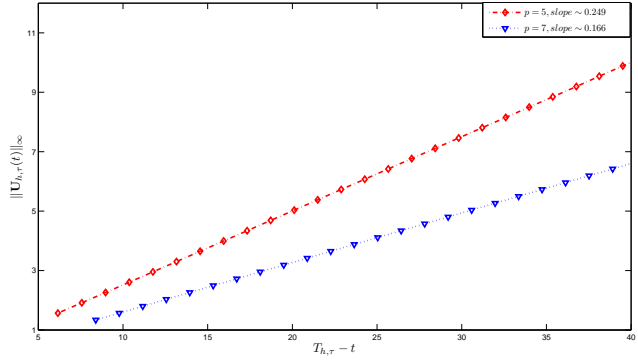}
\caption{Blow-up rate (in log-scale) when $p=5$ and $p=7$, for $I = 400$.}
\label{fig:2blrate}
\end{center}
\end{figure}

In order to examine the theoretical profile defined in \eqref{equ:defFshe}, we recall the method of Berger and Kohn \cite{BKcpam88} to consider the rescaled profile,
\begin{equation}\label{equ:resPro}
z \to u^{(k)}\left(z\lambda^{-1}\xi_{k-1}^+, \tau_k^*\right), \quad |z| < 1,
\end{equation}
where $\xi_{q}^+ = \xi_{q, i_{q}^+}$.
Using the \emph{semilarity variables} defined in \eqref{equ:simiWU} and \eqref{equ:relau_uk}, we get 
\begin{equation}
\label{equ:relaUk_Wk}
u^{(k)}(\xi_k, \tau_k^*) = \lambda^{\frac{2k}{p-1}}(T - t_k)^{-\frac{1}{p-1}}w\left(\lambda^k \frac{\xi_k}{\sqrt{T-t_k}}, s_k\right),
\end{equation}
where $s_k = -\log{(T-t_k)}$.\\
We recall from \eqref{equ:T_tk} that
\begin{equation}\label{equ:estT_tk}
T -t_k \sim \left(\lambda^{2k}M^{1-p}\right) (p-1)^{-1}.
\end{equation}
Substituting \eqref{equ:estT_tk} into \eqref{equ:relaUk_Wk} yields
\begin{equation*}
u^{(k)}(\xi_k, \tau_k^*) \sim (p-1)^{\frac{1}{p-1}} Mw\left(\sqrt{p-1}M^{\frac{p-1}{2}} \xi_k,s_k\right).
\end{equation*}
From \eqref{equ:preproSLHn}, we replace $\xi_k$ by $z\lambda^{-1}\xi_{k-1}^+$ to obtain 
\begin{equation}\label{{equ:rel4He}}
u^{(k)}(z\lambda^{-1}\xi_{k-1}^+, \tau_k^*) \sim (p-1)^{\frac{1}{p-1}} Mf\left(\sqrt{p-1}M^{\frac{p-1}{2}} z\lambda^{-1} \frac{\xi_{k-1}^+}{\sqrt{s_k}}\right).
\end{equation}
Assume that $\frac{\xi_{k-1}^+}{\sqrt{s_k}}$ tends to $\zeta$. Using the fact that $\alpha M = u^{(k-1)}(\xi_{k-1}^+, \tau_k^*)$ yields
\begin{equation*}
\alpha M = M(p-1)^\frac{1}{p-1} f\left(\sqrt{p-1}M^{\frac{p-1}{2}} \zeta \right),
\end{equation*}
or 
\begin{equation*}
\alpha  = (p-1)^\frac{1 }{p-1}f\left(A \zeta \right), \quad  A = \sqrt{p-1}M^{\frac{p-1}{2}}.
\end{equation*}
Using the definition of $f$ in \eqref{equ:defFshe}, it holds that 
\begin{equation*} 
\alpha =(p-1)^\frac{1 }{p-1}\left(p-1 + \frac{(p-1)^2}{4p} |A \zeta |^2 \right) ^\frac{-1}{p-1}.\nonumber\\
\end{equation*}
A straightforward computation gives
\begin{equation}\label{equ:Azeta}
|A\zeta|^2 = \frac{4p}{p-1}\left(\alpha^{1-p} -1 \right).
\end{equation}
Using \eqref{equ:Azeta} and \eqref{{equ:rel4He}}, we arrive at
\begin{eqnarray} 
u^{(k)}(z \lambda^{-1}y_{k-1}^+, \tau_k^*) &\sim & M(p-1)^\frac{1}{p-1} f\left( \lambda^{-1}z (A\zeta)\right)^{-\frac{1}{p-1}} \nonumber\\
&\sim& M (p-1)^\frac{1 }{p-1} \left(p-1 + \frac{(p-1)^2}{4p} \lambda^{-2} z^2 |A \zeta|^2 \right) ^{-\frac{1}{p-1}}\nonumber\\
&\sim& M \left(1 + \left(\alpha^{1-p}- 1\right)\lambda^{-2} z^2 \right)^{-\frac{1}{p-1}}. \label{equ:profnumSHE}
\end{eqnarray}
Since $\mathbf{U}^{(k)}_{h,\tau}$ converges to $u^{(k)}$ as $h$ goes to zero, it holds that  
\begin{equation}\label{equ:numProSHE}
\mathbf{U}^{(k)}_{h,\tau}(z \lambda^{-1}\xi_{k-1}^+, \tau_k^*) \sim M \left(1 + \left(\alpha^{1-p}- 1\right)\lambda^{-2} z^2 \right)^{-\frac{1}{p-1}}, \quad |z| < 1.
\end{equation}
We expect that the left-hand side of \eqref{equ:numProSHE} tends to the predicted profile as $k$ tends to infinity. Figures \ref{fig:3p5} and \ref{fig:3p7} display this relationship after 80 iterations with $I = 400$. Figures \ref{fig:4p5} and \ref{fig:4p7} illustrate the output of our algorithm using $I = 400$ at some selected values of $k$. As $k$ increases these computed profiles converge to the profile shown in Figures \ref{fig:3p5} and \ref{fig:3p7} respectively. We give in Tables \ref{tab:3} and \ref{tab:4} the error in $L^\infty$-norm between the computed profiles and the predicted profile using various values of $I$ in both cases $p = 5$ and $p =7$. The expression of the error is given by
$$e^{(k)}_{h,\tau} = \sup_{-1 \leq z \leq 1} \left| \mathbf{U}_{h,\tau}^{(k)}(z\lambda^{-1}\xi_{k-1}^+, \tau_k^*) - M[1 + (\alpha^{1-p} - 1)\lambda^{-2}z^2]^{-\frac{1}{p-1}}\right|.$$
The graphs of $e^{(k)}_{h,\tau}$ versus $h$ in log-scale are visualized in Figures \ref{fig:5p5} and \ref{fig:5p7}. We observe in those figures that the error tends to zeros as $h \to 0$. We note that the error $e^{(k)}_{h,\tau}$ includes two sources: the discretization error in using the scheme \eqref{equ:sys} and the asymptotic error which refers to the behavior of $w(y,s)$ as $s$ tends to infinity. 
\begin{figure}[!htbp]
\begin{center}
\includegraphics[scale = 0.3]{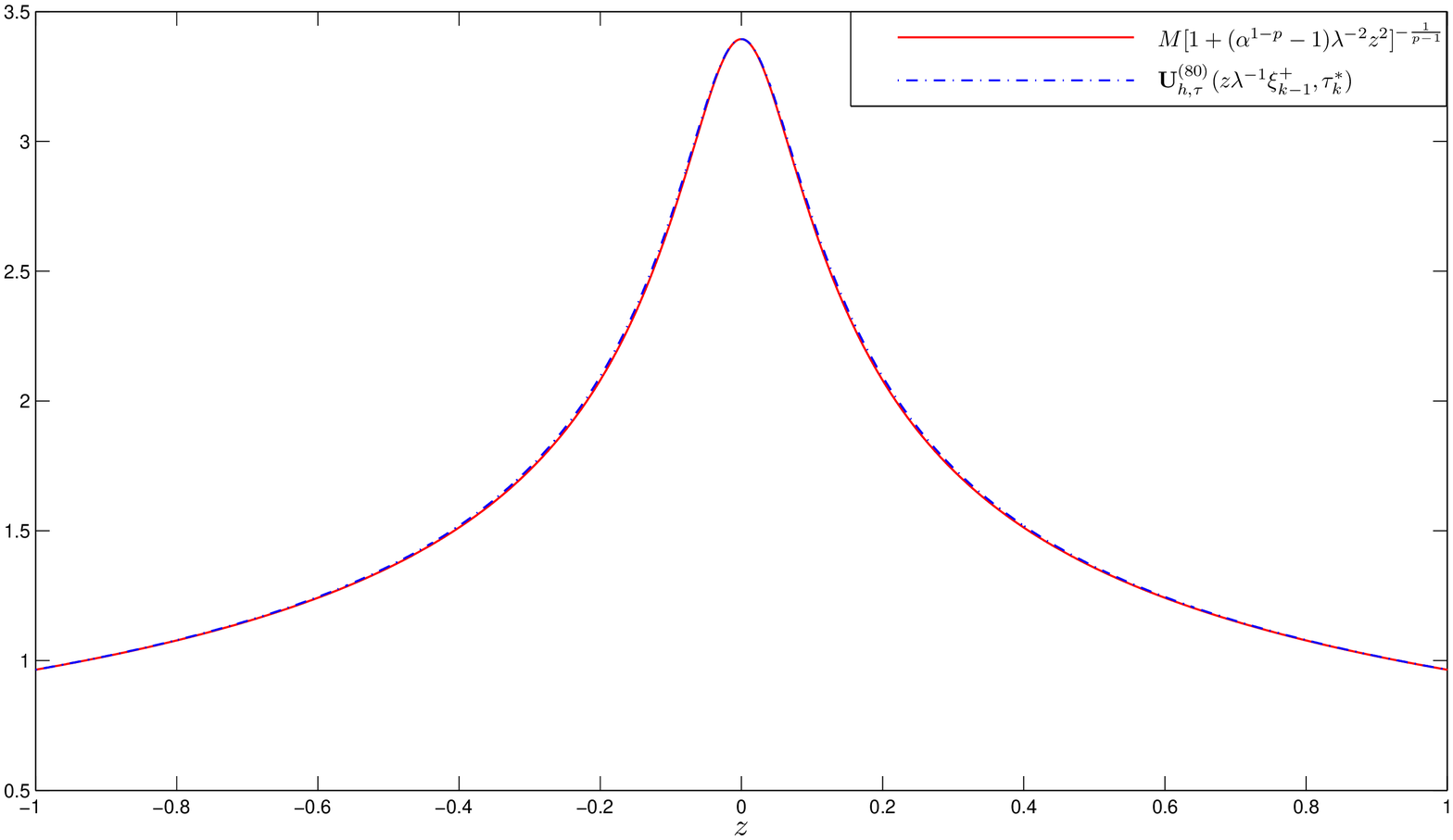}
\caption{The computed profile \eqref{equ:resPro} for $k = 80$ with $I = 400$ and the predicted profile \eqref{equ:numProSHE} with $p=5$.}
\label{fig:3p5}
\end{center}
\end{figure}
\begin{figure}[!htbp]
\begin{center}
\includegraphics[scale = 0.3]{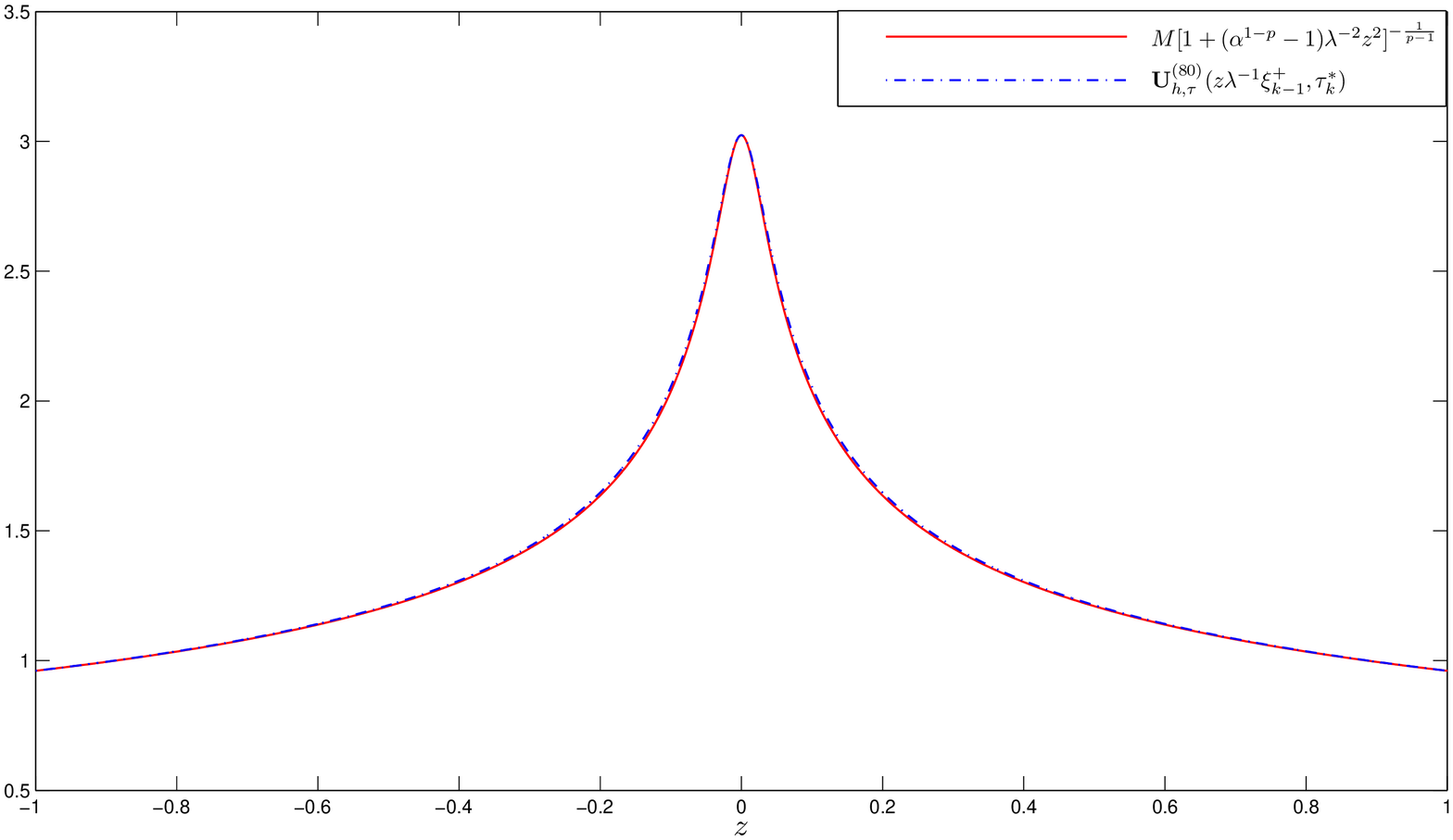}
\caption{The computed profile  \eqref{equ:resPro} for $k = 80$ with $I = 400$ and the predicted profile \eqref{equ:numProSHE} with $p=7$.}
\label{fig:3p7}
\end{center}
\end{figure}
\begin{figure}[!htbp]
\begin{center}
\includegraphics[scale = 0.3]{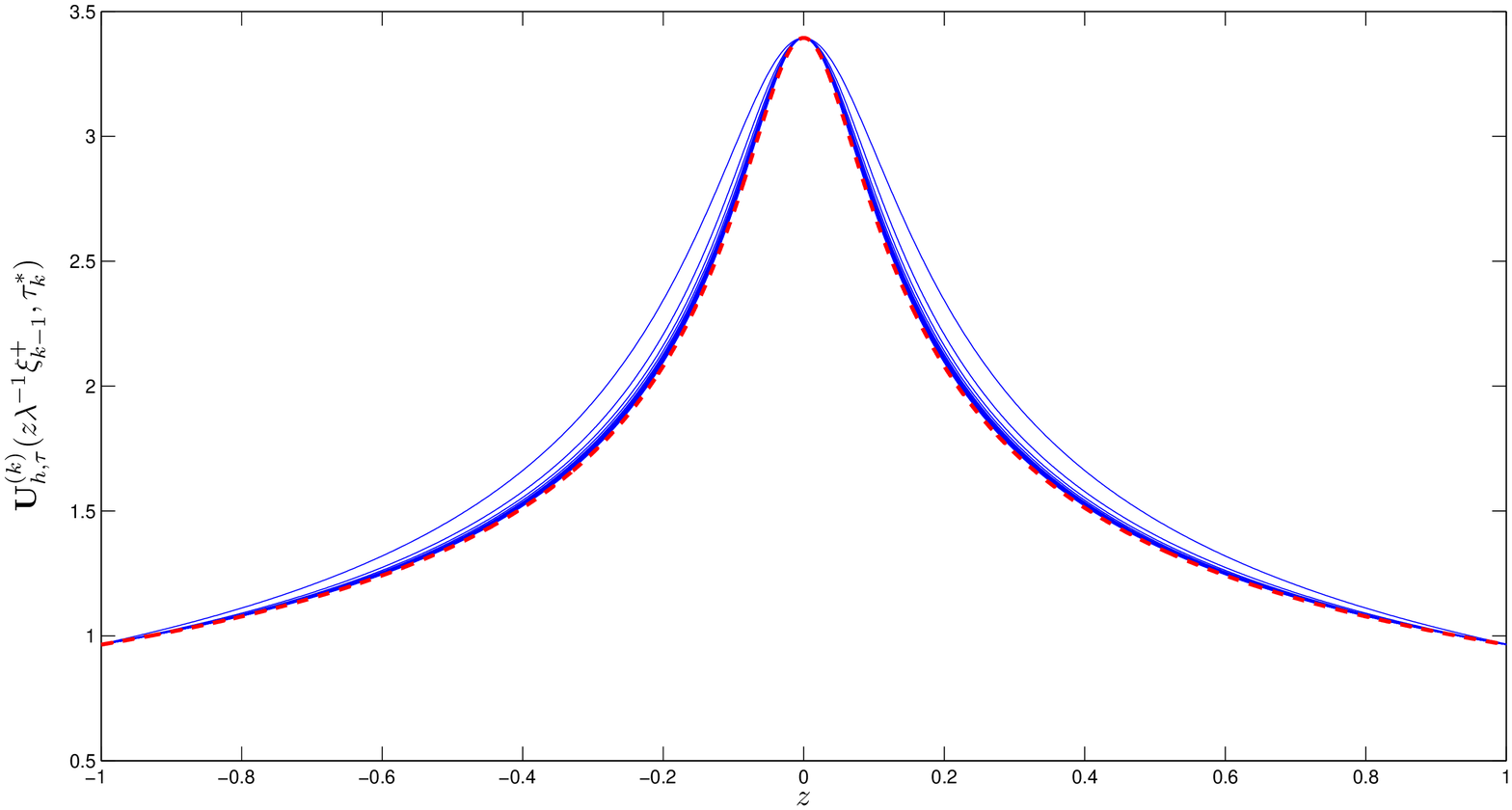}
\caption{The computed profiles as in \eqref{equ:resPro} for selected values of $k$ with $I = 400$ and $p = 5$.}
\label{fig:4p5}
\end{center}
\end{figure}
\begin{figure}[!htbp]
\begin{center}
\includegraphics[scale = 0.3]{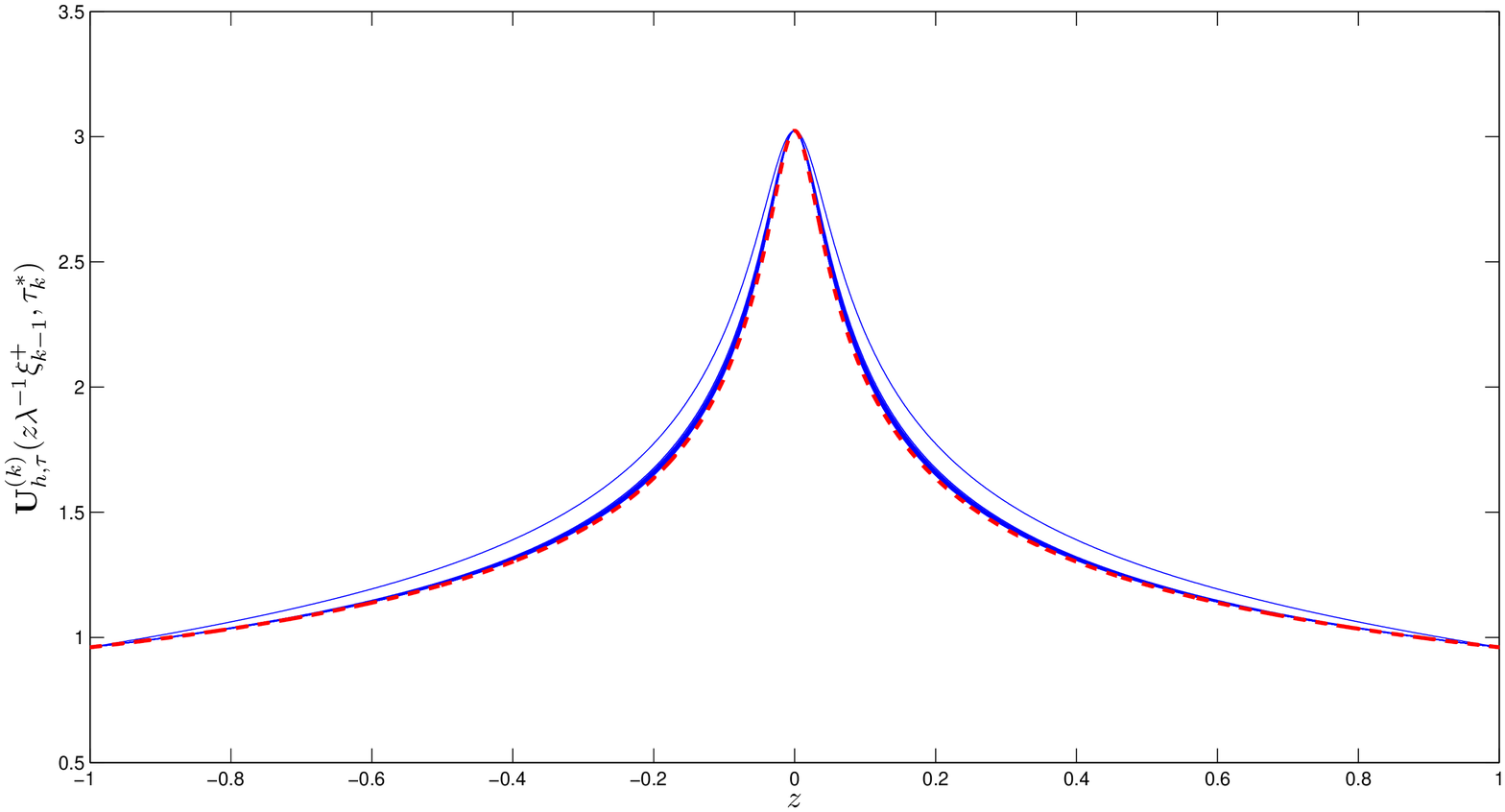}
\caption{The computed profiles as in \eqref{equ:resPro} for selected values of $k$ with $I = 400$ and $p = 7$.}
\label{fig:4p7}
\end{center}
\end{figure}
\begin{table}[!htbp]
\begin{center}
\tabcolsep = 3mm
\begin{tabular}{ccccccc}
\hline
$k$ & I = 50 & I = 100 & I = 160 & I = 250 & I = 320 & I = 400 \\
\hline
10&	0.2023 &  0.1581 &  0.1501  & 0.1466  & 0.1446  & 0.1432 \\
20& 0.1336 &  0.0858 &  0.0787  & 0.0735  & 0.0722  & 0.0715 \\
30& 0.1213 &  0.0636 &  0.0517  & 0.0480  & 0.0483  & 0.0474 \\
40& 0.1141 &  0.0504 &  0.0404  & 0.0376  & 0.0351  & 0.0354 \\
50& 0.1091 &  0.0444 &  0.0341  & 0.0297  & 0.0289  & 0.0276 \\
60& 0.1076 &  0.0409 &  0.0300  & 0.0249  & 0.0241  & 0.0231 \\
70& 0.1068 &  0.0372 &  0.0255  & 0.0214  & 0.0209  & 0.0202 \\
80& 0.1066 &  0.0354 &  0.0232  & 0.0188  & 0.0182  & 0.0174 \\
\hline
\end{tabular}
\end{center} 
\caption{Error in $L^\infty$-norm between the computed profile and the predicted profile for selected values of $k$ using various values of $I$ with $p=5$.}
\label{tab:3}
\end{table}
\begin{table}[!htbp]
\begin{center}\label{tab:3p7}
\tabcolsep = 3mm
\begin{tabular}{ccccccc}
\hline
$k$ & I = 50 & I = 100 & I = 160 & I = 250 & I = 320 & I = 400 \\
\hline
10&	0.2900  & 0.1930  & 0.1205 &  0.0887  & 0.0792  & 0.0730\\
20& 0.2748  & 0.1757  & 0.0970 &  0.0651  & 0.0556  & 0.0516\\
30& 0.2711  & 0.1715  & 0.0880 &  0.0545  & 0.0451  & 0.0398\\
40& 0.2725  & 0.1699  & 0.0843 &  0.0484  & 0.0391  & 0.0337\\
50& 0.2723  & 0.1696  & 0.0822 &  0.0441  & 0.0351  & 0.0295\\
60& 0.2706  & 0.1695  & 0.0810 &  0.0421  & 0.0322  & 0.0265\\
70& 0.2726  & 0.1694  & 0.0803 &  0.0403  & 0.0298  & 0.0240\\
80& 0.2720  & 0.1694  & 0.0802 &  0.0393  & 0.0285  & 0.0224\\
\hline
\end{tabular}
\end{center} 
\caption{Error in $L^\infty$-norm between the computed profile and the predicted profile for selected values of $k$ using various values of $I$ with $p=7$.}
\label{tab:4}
\end{table}
\begin{figure}[!htbp]
\begin{center}
\includegraphics[scale = 0.3]{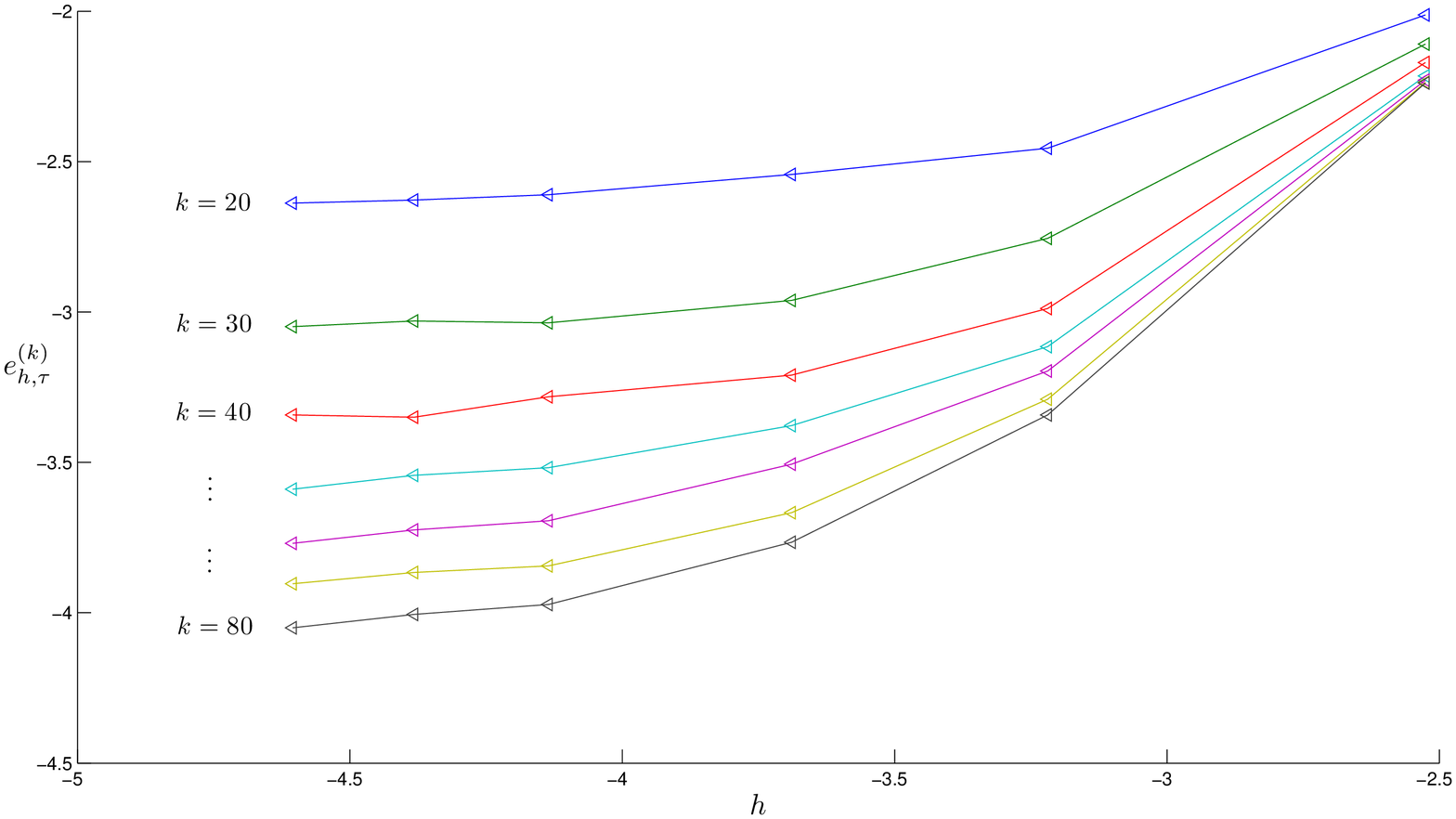}
\caption{Error between the computed profiles and the predicted profile in log-scale when $p = 5$.}
\label{fig:5p5}
\end{center}
\end{figure}
\begin{figure}[!htbp]
\begin{center}
\includegraphics[scale = 0.3]{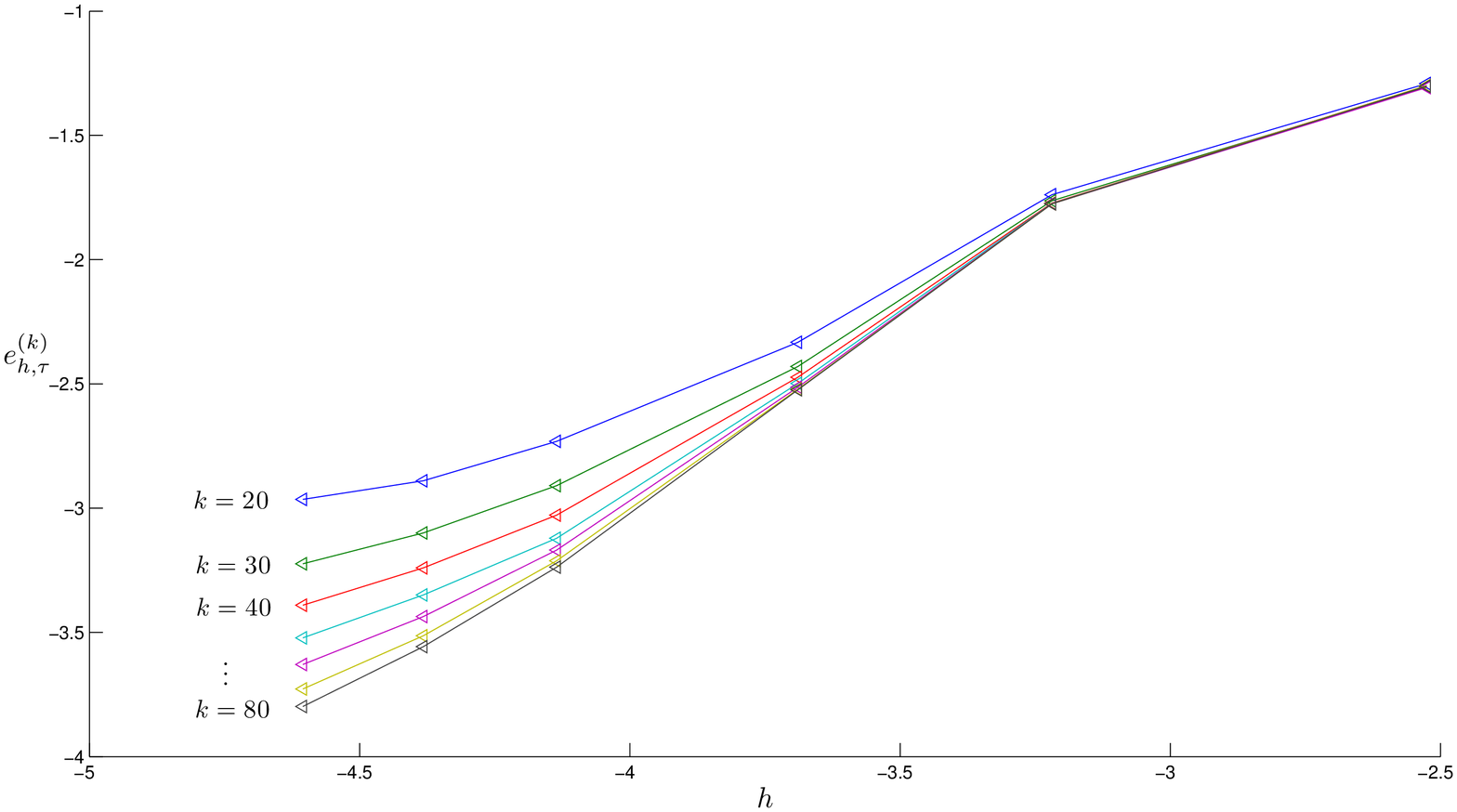}
\caption{Error between the computed profiles and the predicted profile in log-scale when $p = 7$.}
\label{fig:5p7}
\end{center}
\end{figure}

\subsection{The nonlinear heat equation in case $\beta \ne 0$}\label{sec:5.2}
\subsubsection{A formal calculation}
This part gives a formal calculation to obtain the prediction given in \eqref{equ:preproFgra}. This kind of arguments can be found in \cite{BKcpam88}, \cite{MZdm97} and \cite{MZjfa08}. Using similarity variables defined in \eqref{equ:simiWU} with $a = 0$, we see that $w = w_{0,T}$ satisfies the following equation for all $s\ge -\log{T}$ and $y \in \mathbb{R}^N$:
\begin{equation}\label{equ:selfw}
w_s = \Delta w - \frac{1}{2}y. \nabla w - \frac{w}{p-1} + |w|^{p-1}w + \beta |\nabla w|^{\frac{2p}{p+1}}.
\end{equation}
We try to find a solution of \eqref{equ:selfw} in the form $v\left(\frac{y}{\sqrt{s}}\right)$, with 
\begin{equation*}
v(0) = \kappa, \quad \lim_{|z| \to +\infty}|v(z)| = 0.
\end{equation*}
A computation shows that $v$ must satisfy the following equation, for each $s \geq -\log T$ and each $z \in \mathbb{R}^N$:
\begin{equation}\label{equ:Vz}
-\frac{z. \nabla v(z)}{2s} = \frac{1}{s} \Delta v(z) - \frac{z.\nabla v(z)}{2} - \frac{v(z)}{p-1} + |v(z)|^{p-1}v(z) +  \frac{\beta}{s^{\frac{p}{p+1}}}|\nabla v(z)|^{\frac{2p}{p+1}}.
\end{equation}
We formally seek regular solutions of \eqref{equ:selfw} in the form 
\begin{equation*}
V(z) = v_0\left(z\right) + \dfrac{1}{s^\alpha} R(z,s),
\end{equation*}
where $z = \frac{y}{\sqrt{s}}$, $\alpha > 0$ and $\|R\|_{L^\infty} \leq C$. \\
Pugging this ansatz in \eqref{equ:Vz} and making $s \to +\infty$, we obtain the following equation satisfied by $v_0$,
\begin{equation}\label{equ:v0}
-\frac{1}{2}zv_0'(z) - \frac{1}{p-1}v_0(z) + v_0(z)^p = 0.
\end{equation}
Solving \eqref{equ:v0} yields
\begin{equation}
v_0(z) = \left(p-1 + b z^2\right)^{-\frac{1}{p-1}},
\end{equation}
for some constant $b = b(\beta) \in \mathbb{R}$. We impose $b(\beta) > 0$ in order to have a bounded constant solution.
\begin{rema}
In the case $\beta = 0$, imposing an analyticity condition, Berger and Kohn \cite{BKcpam88} have formally found $b(0) = \frac{(p-1)^2}{4p}$, which is the coefficient of $f$ given in \eqref{equ:defFshe}. The value of $c_0$ was confirmed in several contributions (Filippas and Kohn \cite{FKcpam92}, Herrero and Vel\'azquez \cite{HVaihn93}, Bricmont and Kupiainen \cite{BKnon94}). Unfortunately, we were not able to adapt the formal approach of \cite{BKcpam88} in the case $\beta \ne 0$, so we only have a numerical expression of $\beta$ in Figure \ref{fig:BofBeta} below.
\end{rema}
\subsubsection{Numerical simulations}
An important aim in this work is to give a numerical confirmation for the conjectured profile given in \eqref{equ:preproGran}. Note that we have just given a formal argument in the previous subsection, for the existence of that profile, without, specifying the value of $b(\beta)$. Up to our knowledge, there is neither a rigorous proof nor a numerical confirmation for \eqref{equ:preproGran}, and our paper is the first to exhibit such a solution numerically. More importantly, thanks to our computations, we are able to find a numerical approximation of $b(\beta)$ in the formula of $\bar{f}_\beta$ in \eqref{equ:defFgra} from our computations. \\

If we make the same analysis to check that the numerical profile fits with the conjecture theoretical profile \eqref{equ:preproGran} as the above analysis when $\beta = 0$, then the same result holds in this case, namely
\begin{equation}\label{equ:numProGRA}
u^{(k)}(z \lambda^{-1}\xi_{k-1}^+, \tau_k^*) \sim M \left(1 + \left(\alpha^{1-p}- 1\right)\lambda^{-2} z^2 \right)^{-\frac{1}{p-1}},  \quad -1< z < 1.
\end{equation}
Figures \ref{fig:6betap5} and \ref{fig:6betap7} show the graphs of the computed profile $\mathbf{U_{h,\tau}^{(80)}}(z\lambda^{-1}\xi_{k-1}^+, \tau_k^*)$ and the predicted profile given in the right hand side of \eqref{equ:numProGRA}, for computations using $I = 320$, $\beta = 1$, $p = 5$ and $p = 7$.\\
\begin{figure}[!htbp]
\begin{center}
\includegraphics[scale = 0.3]{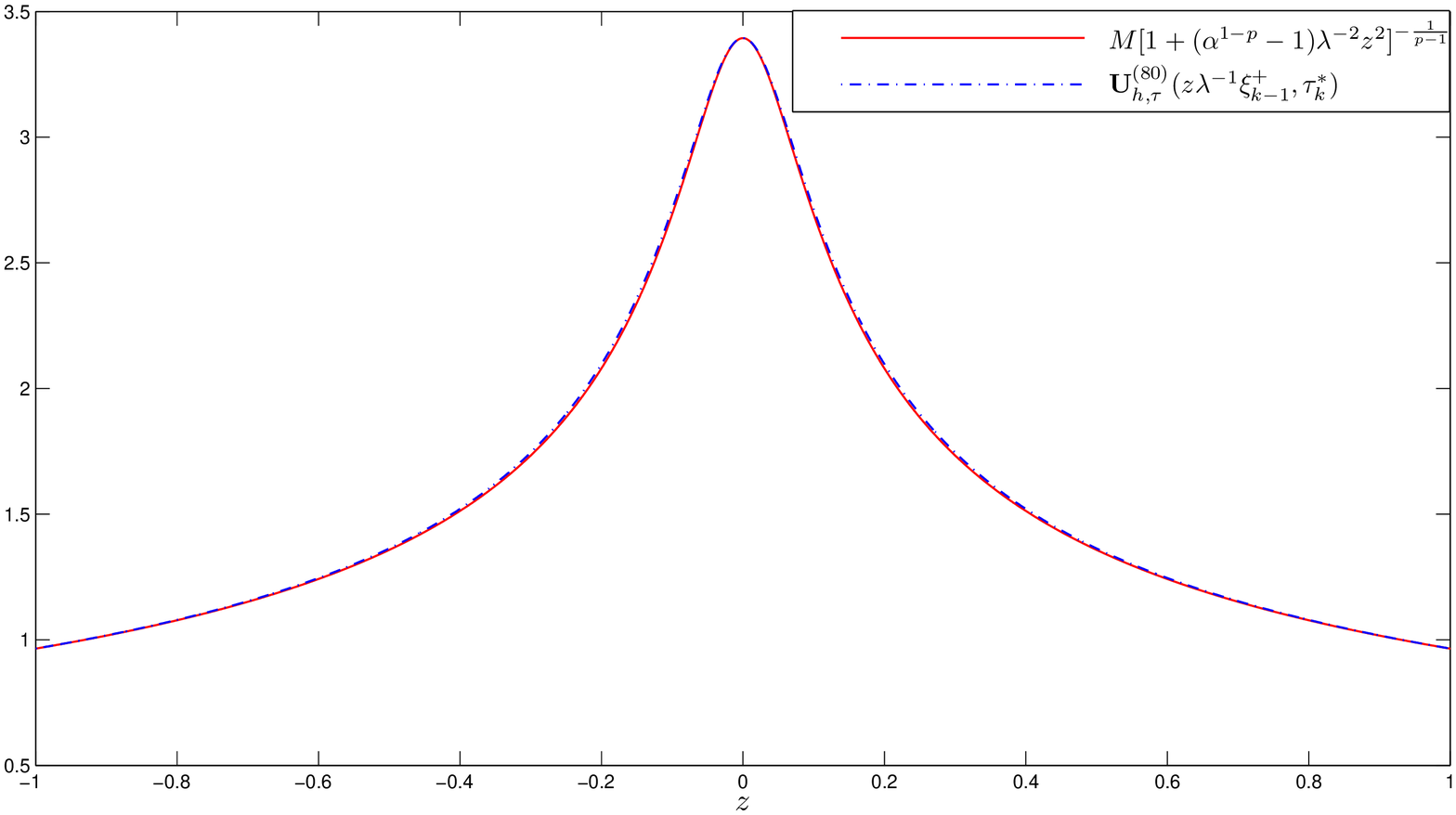}
\caption{The computed and the predicted profiles in \eqref{equ:numProGRA}, for computations using $I = 320$, $\beta = 1$ and $p = 5$.}
\label{fig:6betap5}
\end{center}
\end{figure}
\begin{figure}[!htbp]
\begin{center}
\includegraphics[scale = 0.3]{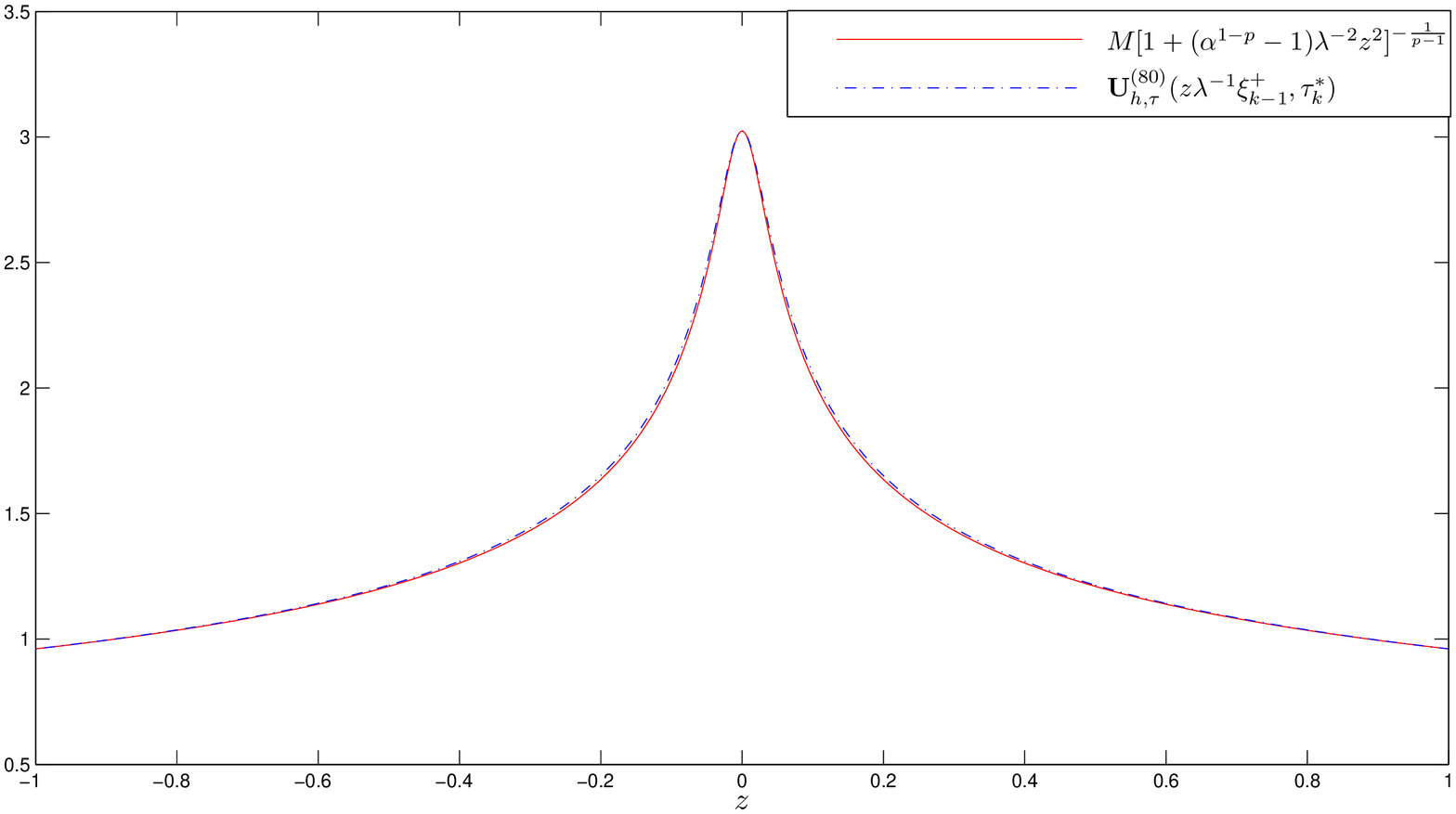}
\caption{The computed and the predicted profiles in \eqref{equ:numProGRA},  for computations using $I = 320$, $\beta = 1$ and $p = 7$.}
\label{fig:6betap7}
\end{center}
\end{figure}

In order to compute the value of $b(\beta)$ from the simulations, we use the relation \eqref{equ:relaUk_Wk} with $\xi_{k} = z\lambda^{-1}\xi_{k-1}^+$, we get
\begin{equation}\label{equ:grela12}
u^{(k)}(z\lambda^{-1}\xi_{k-1}^+, \tau_k^*) = \lambda^{\frac{2k}{p-1}} (T -t_k)^{-\frac{1}{p-1}}w \left(\lambda^k \frac{z\lambda^{-1}\xi_{k-1}^+}{\sqrt{T-t_k}}, s_k \right).
\end{equation}
We recall from \eqref{equ:defFgra} that the predicted profile $\bar{f}_\beta$ is given by
\begin{equation}\label{equ:theoProGRA}
\bar{f}_\beta(z) = \kappa \left(1 + \frac{b(\beta)}{p-1} z^2 \right)^{-\frac{1}{p-1}}, \quad z = \frac{x}{\sqrt{(T-t)|\log(T-t)|}}, \quad \kappa = (p-1)^{-\frac{1}{p-1}},
\end{equation}
and that
\begin{equation}\label{equ:inf3}
\sup_{|z| < K} \left| w(y,s) - \bar{f}_\beta(z) \right| \to 0 \quad \text{as } s \to \infty \quad \text{with } z = \frac{y}{\sqrt{s}}.
\end{equation}
From \eqref{equ:inf3}, \eqref{equ:theoProGRA} and \eqref{equ:grela12}, ignoring the error of asymptotic behavior as $s $ goes to infinity, we obtain
\begin{align*}
u^{(k)}(z\lambda^{-1}\xi_{k-1}^+, \tau_k^*) &= \lambda^{\frac{2k}{p-1}} (T -t_k)^{-\frac{1}{p-1}}f \left(\lambda^k \frac{z\lambda^{-1}\xi_{k-1}^+}{\sqrt{T-t_k}}\times \frac{1}{\sqrt{s_k}} \right)\\
&= \lambda^{\frac{2k}{p-1}} (T -t_k)^{\frac{-1}{p-1}}\kappa\left(1 +  \frac{b(\beta)}{p-1} \frac{\lambda^{2k} z^2 \lambda^{-2} (\xi_{k-1}^+)^2}{T - t_k} \frac{1}{s_k}\right)^{-\frac{1}{p-1}}.
\end{align*}
After some straightforward calculations, we arrive at
\begin{equation*}
b(\beta) = \frac{s_k}{(\xi_{k-1}^+)^2}\left[ \frac{\kappa \lambda^{2k}(p-1)[u^{(k)}(z\lambda^{-1}\xi_{k-1}^+, \tau_k^*)]^{1-p} - (p-1)(T-t_k)}{\lambda^{2k-2} z^2} \right].
\end{equation*}
Setting $z = \lambda$ and taking the limit of the above equation as $k$ goes to infinity, we get
\begin{equation*}
b(\beta) = \lim_{k \to +\infty}\frac{s_k}{(\xi_{k-1}^+)^2}\zeta_k, 
\end{equation*}
where $$\zeta_k = (p-1)\left( \kappa [u^{(k)}(\xi_{k-1}^+, \tau_k^*)]^{1-p} - \lambda^{-2k}(T-t_k)\right).$$
Using \eqref{equ:T_tk} and \eqref{equ:numProGRA}, we see that $\zeta_k$ approaches a limit given by
$$\lim_{k \to + \infty}\zeta_k = M^{1-p}\left[(p-1)\kappa\alpha^{1-p} -1\right].$$
This implies that the ratio $\frac{s_k}{(\xi_{k-1}^+)^2}$ should approach a constant as $k$ tends to infinity. This is presented in Figure \ref{fig:7appRat}.
\begin{figure}[!htbp]
\begin{center}
\includegraphics[scale = 0.3]{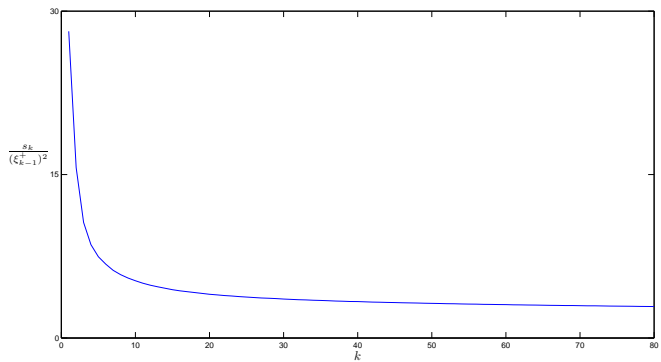}
\caption{The graph of $\frac{s_k}{(\xi_{k-1}^+)^2}$ versus $k$, for computations using $I = 320$, $p = 5$ and $\beta = 0$.}
\label{fig:7appRat}
\end{center}
\end{figure}
We remark that the computations of $s_k$ and $\zeta_k$ do not depend on $\beta$. 
Moreover, we know that the value of $b(0)$ is $\frac{(p-1)^2}{4p}$. In particular, we compute the value of $b(\beta)$ by
\begin{equation*}
b(\beta) = \frac{C_K}{\left[\xi_{K-1}^+(\beta)\right]^2},
\end{equation*}
where $C_K = b(0)\left[\xi_{K-1}^+(0)\right]^2$ for $K$ large.
\begin{figure}[!htbp]
\begin{center}
\includegraphics[scale = 0.3]{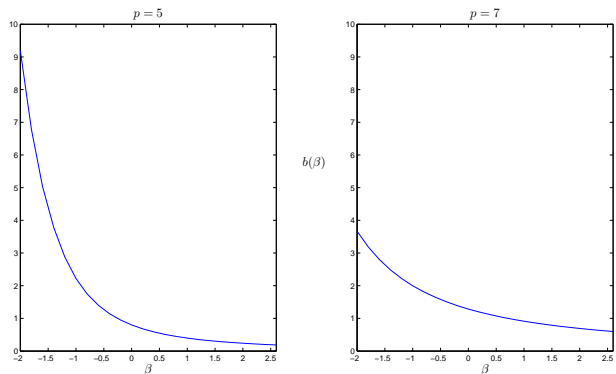}
\caption{The computed values of $b(\beta)$. (Left) $p =5$. (Right) $p = 7$.}
\label{fig:BofBeta}
\end{center}
\end{figure}

\noindent Consequently, we have just given a numerical evidence for the following conjecture:
\begin{conj} Equation \eqref{equ:semiHeatEqu} has a solution $u(x,t)$ which blows up in finite time $T$ with 
\begin{equation}\label{equ:conjFb}
\sup_{|z|<K}|(T-t)^{1/(p-1)}u(x,t) - \bar{f_\beta}(z)| \to 0, \quad \text{as}\quad  t \to T,
\end{equation}
where $K > 0, z= \frac{x}{\sqrt{(T-t)|\log(T-t)|}}$ and 
\begin{equation*}
\bar{f_\beta}(z) = \left(p-1+b(\beta)|z|^2\right)^{-\frac{1}{p-1}}, \quad \text{with}\quad b(0) = \frac{(p-1)^2}{4p},
\end{equation*}
and $b(\beta)$ is represented in Figure \ref{fig:BofBeta}, for $p = 5$ and $p = 7$.
\end{conj}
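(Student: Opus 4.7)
The plan is to follow the Bricmont--Kupiainen / Merle--Zaag strategy used for the pure-power semilinear heat equation, treating $\beta|\nabla w|^{2p/(p+1)}$ as a perturbation that is non-trivial but strictly lower order than the leading profile dynamics. First I would rewrite \eqref{equ:semiHeatEqu} in the similarity variables \eqref{equ:simiWU}, obtaining
\[
\partial_s w = \Delta w - \tfrac{1}{2}y\cdot \nabla w - \tfrac{w}{p-1} + |w|^{p-1}w + \beta|\nabla w|^{2p/(p+1)},
\]
where the critical choice $q=2p/(p+1)$ ensures, through the scale invariance \eqref{equ:ScaPro}, that no extra $s$-dependent prefactor appears in front of the gradient term. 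The task then reduces to constructing, for the conjectured value of $b(\beta)>0$, a solution with $w(y,s) - \bar f_\beta(y/\sqrt{s})\to 0$ locally uniformly as $s\to+\infty$.

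Next I would linearise around a quasi-stationary ansatz $\varphi(y,s)= \bar f_\beta(y/\sqrt{s}) + c_1/s$, the $1/s$ corrector being chosen so that substituting $\varphi$ in the transformed equation leaves a residual of strictly smaller order. Setting $w=\varphi+q$, the perturbation obeys
\[
\partial_s q = \mathcal{L}q + V(y,s)\,q + N(q) + R(y,s),
\]
where $\mathcal{L} = \Delta - \tfrac{1}{2}y\cdot\nabla + 1$ is self-adjoint in $L^2_\rho(\mathbb{R})$ with $\rho(y)=e^{-y^2/4}/\sqrt{4\pi}$, and has discrete spectrum $\{1-m/2 : m\in\mathbb{N}\}$ with Hermite eigenfunctions $h_m$. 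The term $N(q)$ now carries an extra contribution $\beta(|\nabla w|^{2p/(p+1)} - |\nabla \varphi|^{2p/(p+1)})$ coming from the new nonlinearity.

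Following \cite{MZdm97}, I would then control $q$ by decomposing it along the eigenmodes of $\mathcal{L}$ on the parabolic region $|y|\le K\sqrt{s}$ and by using a weighted $L^\infty$ norm in the outer region. The modes $m=0,1$ are linearly unstable, $m=2$ is the null mode responsible for fixing $b(\beta)$, and the modes $m\ge 3$ are exponentially stable. I would define a shrinking set $V_K(s)$ with prescribed decay rates on each component and prove a trapping property: a trajectory entering $V_K(s_0)$ remains in $V_K(s)$ for $s\ge s_0$ as long as its two unstable projections are controlled. The essential new estimate is that in the inner region $|\nabla w| = O(s^{-1/2})$, so the gradient perturbation contributes terms of size $s^{-p/(p+1)}$ in both $R$ and $N$; since $p/(p+1)>1/2$, these are strictly lower order than the profile corrections and do not disturb the mode-by-mode analysis. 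A Brouwer-type shooting over the two-dimensional unstable parameter space would then select initial data that cancels both unstable modes, producing the desired solution.

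The main obstacle, in my view, is pinning down $b(\beta)$ analytically. At leading order the profile equation for $v_0(z)$ is independent of $\beta$ and admits a one-parameter family of solutions indexed by $b>0$; in the case $\beta=0$ a logarithmic matching (equivalently a solvability condition on the $m=2$ Hermite mode) forces $b(0)=(p-1)^2/(4p)$, as in \cite{BKcpam88,GPans85,BKnon94}. When $\beta\ne 0$ the gradient term contributes an additional source at order $s^{-p/(p+1)}$, and $b(\beta)$ should be determined by a new algebraic solvability condition arising from orthogonality to $h_2$ after incorporating this perturbation. Extracting this condition in closed form and reconciling it with the numerical curve in Figure~\ref{fig:BofBeta} is the central analytical difficulty the conjecture raises.
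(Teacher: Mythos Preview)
The statement you are addressing is explicitly labelled a \emph{Conjecture} in the paper, and the paper offers no proof of it: the claim is supported only by the formal calculation in Section~\ref{sec:5.2} and the accompanying numerical simulations, and the authors state outright that they ``were not able to adapt the formal approach of \cite{BKcpam88}'' to determine $b(\beta)$ when $\beta\ne0$. There is thus no proof in the paper to compare against; what you have written is a sketch of a rigorous argument for something the paper deliberately leaves open.

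That said, your outline is sensible and is essentially the strategy of \cite{EZsema11} (cited here) for exactly this equation. One internal tension in your write-up deserves comment. You first argue that the gradient term contributes at order $s^{-p/(p+1)}$ and is ``strictly lower order'' so as not to disturb the mode-by-mode analysis, and then you expect $b(\beta)$ to be fixed by a ``new algebraic solvability condition'' coming from that same term. These two claims pull in opposite directions. For the projection onto the null mode $h_2$ (which is what selects $b$), the relevant region is $|y|=O(1)$, where the Gaussian weight $\rho$ is concentrated; there, since the profile is even with $f'(0)=0$, one has $|\nabla w|\sim |y|/s$ rather than $s^{-1/2}$, hence $\beta|\nabla w|^{2p/(p+1)}=O(s^{-2p/(p+1)})$ with $2p/(p+1)>1$. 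The gradient nonlinearity is therefore genuinely subcritical at the order where $b$ is determined, and the Merle--Zaag construction delivers a solution with $b(\beta)=b(0)=(p-1)^2/(4p)$ independently of $\beta$, which is what \cite{EZsema11} establishes. Whether the $\beta$-dependence displayed in Figure~\ref{fig:BofBeta} reflects a different (perhaps generic) branch of blow-up solutions or a numerical effect is precisely the open issue that makes this a conjecture rather than a theorem.
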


\noindent While remarking numerical simulation for equation \eqref{equ:semiHeatEqu} with $\beta \ne 0$, we could never obtain the self-similar behavior \eqref{equ:sesi} rigorously proved in \cite{STWiumj96}. On the contrary, we could exhibit the behavior \eqref{equ:conjFb}, at the heart of our conjecture. In our opinion, this is probably due to the fact that the behavior \eqref{equ:sesi} is unstable, unlike the behavior \eqref{equ:conjFb}, which we suspect to be stable with respect to perturbations in initial data. 

\subsection{The complex Ginzburg-Landau equation}
We recall that $e^{\imath \theta} \tilde{f}_{\delta, \gamma}$ is an asymptotic profile of the solution of \eqref{equ:GL} where $\theta \in \mathbb{R}$ and $\tilde{f}_{\delta, \gamma}$ is given in \eqref{equ:funcfpGL}, namely 
\begin{equation}\label{equ:proGLf2}
\tilde{f}_{\delta, \gamma} = \left(p-1 + b(\delta, \gamma) |z|^2\right)^{-\frac{1 + \imath \delta}{p-1}}, \quad b(\delta, \gamma) = \frac{(p-1)^2}{4(p-\delta^2 - \gamma\delta - \gamma\delta p)} > 0.
\end{equation}
Using the same analysis as Section \ref{sec:anaSHE} resulting \eqref{equ:profnumSHE}, we have for $|z| < 1$,
\begin{equation}\label{equ:profnumGL}
u^{(k)}(z\lambda^{-1}y_{k-1}^+, \tau_k^*) \sim M^{1 + \imath \delta} \lambda^{-\frac{2\imath k \delta}{p-1}} (p-1) ^\frac{\imath \delta}{ p-1} e^{\imath \theta} \left(1 + (\alpha^{1-p} - 1)\lambda^{-2}z^2\right)^{-\frac{1 + \imath \delta}{p-1}}.
\end{equation}
\begin{rema}
We remark that the rescaled profile \eqref{equ:profnumGL} is obtained under the assumption $p - \delta^2 - \gamma\delta (p+1) > 0$. If this condition is not satisfied, the question is open. 
\end{rema}
\begin{rema} If we take the modulus and the phase of both sides in \eqref{equ:profnumGL}, then we get
\begin{equation}\label{equ:modulusGL}
\left|u^{(k)}\right|(z\lambda^{-1}y_{k-1}^+, \tau_k^*) \sim M (1 + (\alpha^{1-p} - 1)\lambda^{-2}z^2)^{-\frac{1}{p-1}}, \quad |z| < 1,
\end{equation}
\begin{align}
phase\left[u^{(k)}\right](z\lambda^{-1}y_{k-1}^+, \tau_k^*) &\sim \theta + \frac{\delta}{p-1}\left(\ln M + \ln (p-1) - 2k \ln \alpha \right) \nonumber\\
& - \frac{\delta}{p-1}\ln \left(1 + (\alpha^{1-p} - 1)\lambda^{-2}z^2 \right), \quad |z| < 1. \label{equ:phaseGL}
\end{align}
The right hand side of \eqref{equ:modulusGL} is the same as in \eqref{equ:profnumSHE}.
\end{rema}

\subsubsection{Experiments with  $p - \delta^2 - \gamma\delta (p+1) > 0$.}
We first make an experiment with $\gamma= 0, \,\delta = 0.2, \, p = 5$ and the initial grid with $I = 320$. The numerical result displayed in Figure \ref{fig:GLb0d02} is in agreement with the expectation obtained in \eqref{equ:modulusGL} and \eqref{equ:phaseGL}. Both the numerical modulus and phase coincide with the predicted profile given in \eqref{equ:modulusGL} and \eqref{equ:phaseGL} within plotting resolution. 
\begin{figure}[!htbp]
\begin{center}
\includegraphics[scale = 0.3]{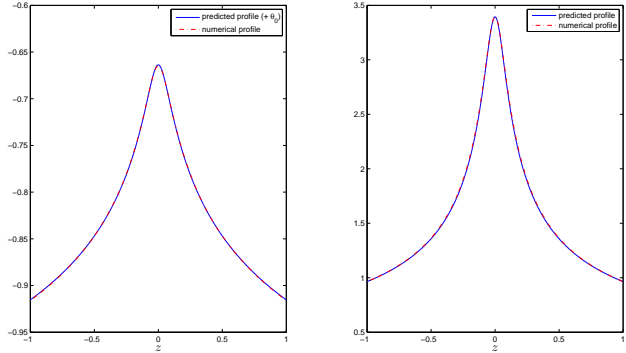}
\caption{Comparing the numerical profile with the predicted profile given in \eqref{equ:modulusGL} and \eqref{equ:phaseGL} after 80 iterative steps ($\gamma = 0, \delta =0.2, p =5, I = 320$). (Left) $phase \left[u^{(k)}\right](z \lambda^{-1} y^+_{k-1}, \tau_k^*)$. (Right) $\left|u^{(k)}\right|(z \lambda^{-1} y^+_{k-1}, \tau_k^*)$.}
\label{fig:GLb0d02}
\end{center}
\end{figure}

An experiment with $\gamma= 0, \, p =5$ and various values of $\delta$ are performed on three grids with $I = 100, 200, 320$. The purpose is to confirm the theoretical profile $\tilde{f}_{\delta, \gamma}$ given in \eqref{equ:proGLf2}. More precisely, we would like to calculate values of $b(\delta, 0)$ from our numerical simulation. We recall that the theoretical value of $b(\delta,0)$ is equal to $\frac{(p-1)^2}{p - \delta^2}$. In Figure \ref{fig:GLvalueB}, we have the computed values of $b(\delta,0)$ on various initial grids $I$. Note that these computed values tend to the predicted ones as $I$ increases. However, as $\delta$ approaches $\sqrt{p}$ ($\sqrt{5}$ in Figure \ref{fig:GLvalueB}), $b$ becomes singular, and that is the reason why the coincidence between the numerical and theoretical values becomes less clear.
\begin{figure}[!htbp]
\begin{center}
\includegraphics[scale = 0.3]{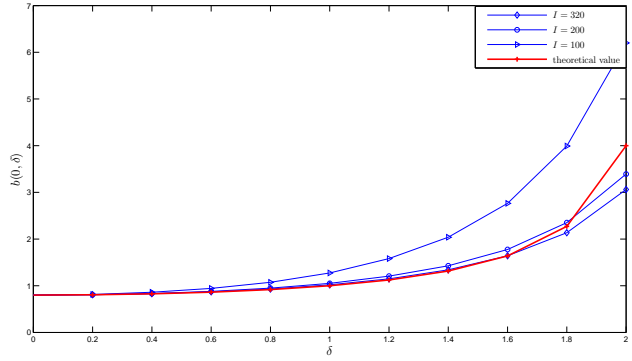}
\caption{The computed values of $b(\delta,0)$ for various initial grids when $ p = 5$.}
\label{fig:GLvalueB}
\end{center}
\end{figure}

A further experiment with $\gamma = 1, \, \delta = 1$ is shown in Figure \ref{fig:GLb1d1}. These calculations show the relationship we obtained in \eqref{equ:modulusGL} and \eqref{equ:phaseGL}. Both the numerical phase and modulus coincide with the predicted ones given in \eqref{equ:modulusGL} and \eqref{equ:phaseGL} within plotting resolution.

\begin{figure}[!htbp]
\begin{center}
\includegraphics[scale = 0.3]{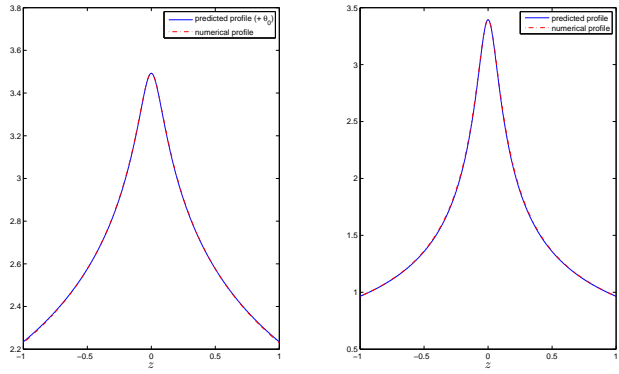}
\caption{Comparing the numerical profile with the predicted profile given in \eqref{equ:modulusGL} and \eqref{equ:phaseGL} after 80 iterative steps ($\gamma = 1, \delta =1, p =5, I = 320$). (Left) $phase \left[u^{(k)}\right](z \lambda^{-1} y^+_{k-1}, \tau_k^*)$. (Right) $\left|u^{(k)}\right|(z \lambda^{-1} y^+_{k-1}, \tau_k^*)$.}
\label{fig:GLb1d1}
\end{center}
\end{figure}

\subsubsection{Experiments with  $p - \delta^2 - \gamma\delta (p+1) < 0$.}
In this section, we make some experiments with $\gamma = 0$ and $\delta > \sqrt{p} = \sqrt{5}$. For $\delta$ large enough, there is no blow-up phenomenon (for example with $\delta = 3$). With $\delta$ near $\sqrt{p}$, we made two simulations with $\delta = \sqrt{p} + 0.1$ and $\delta = \sqrt{p} + 0.5$, then the blow-up phenomenon still occurs. Figure \ref{fig:GLb0d01} displays the modulus of $u^{(k)}(z\lambda^{-1}y_{k-1}^+, \tau_k^*)$ at some selected values of $k$, for computations using the initial grid $I = 320$. It shows the rescaled profile $z \mapsto \left|u^{(k)}\right|(z\lambda^{-1}y_{k-1}^+, \tau_k^*)$. We can see that these rescaled profiles converge as $k $ increases.\\

\begin{figure}[!htbp]
\begin{center}
\includegraphics[scale= 0.3]{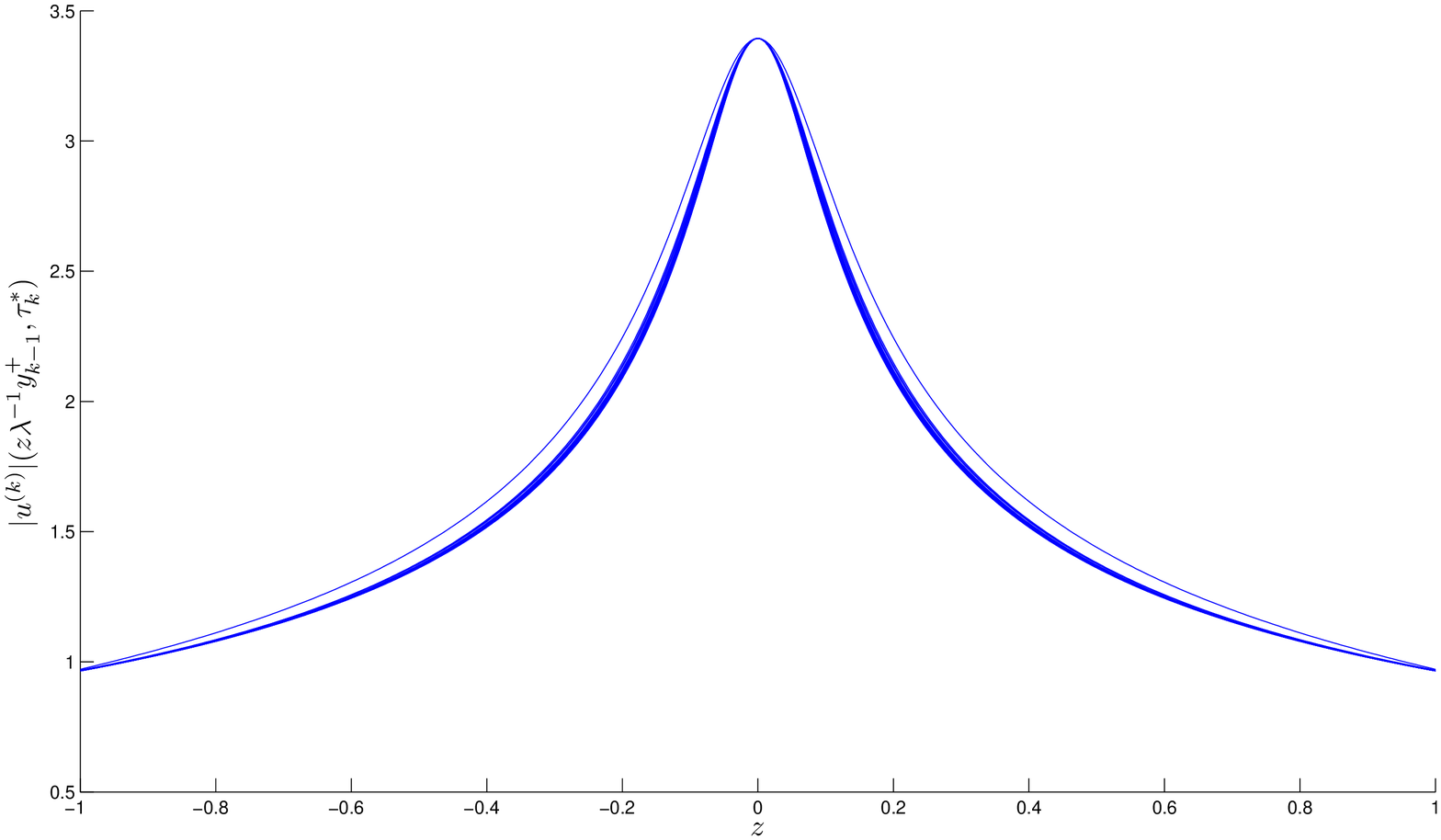}
\includegraphics[scale= 0.3]{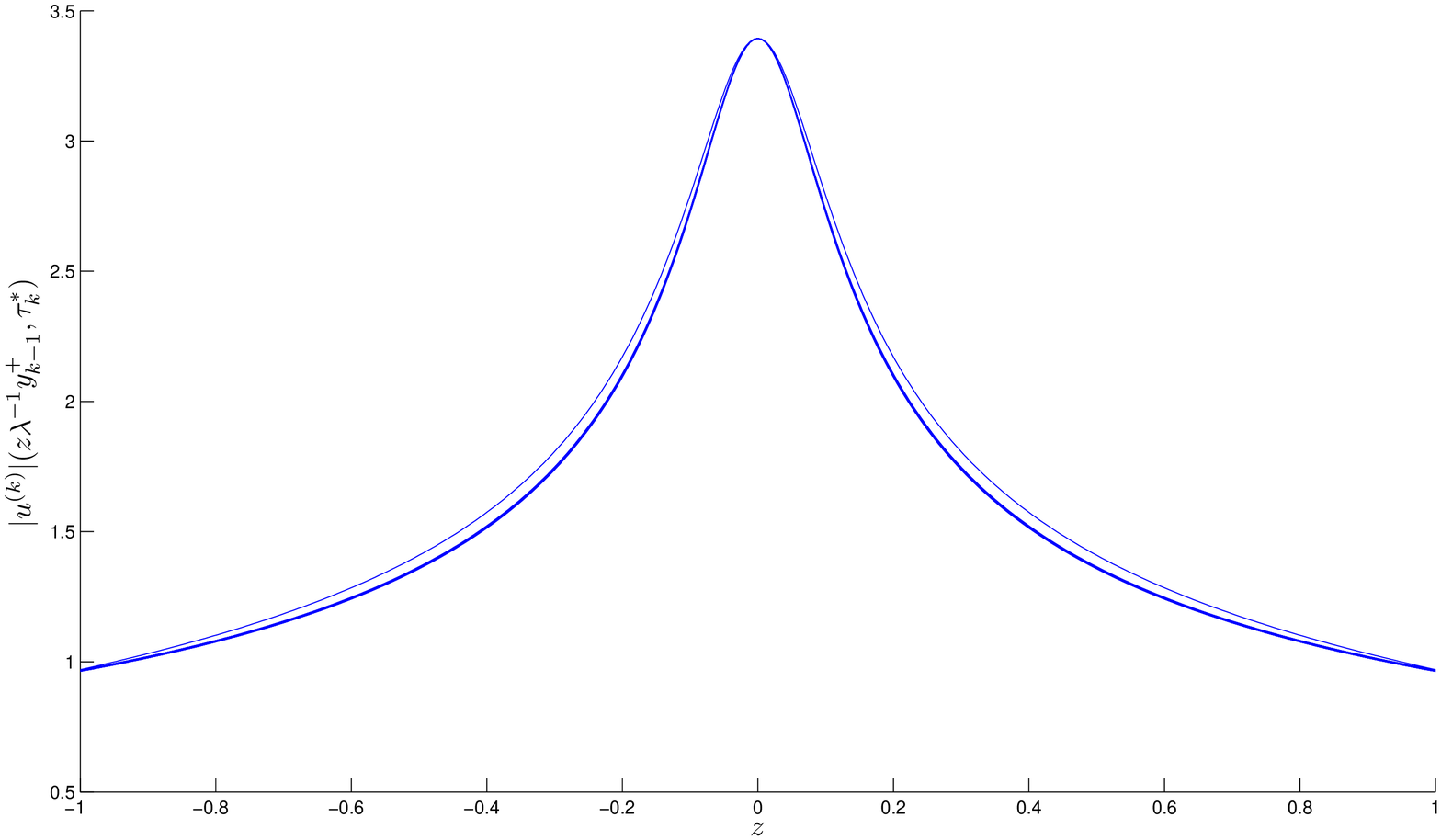}
\caption{The numerical values of $u^{(k)}(z\lambda^{-1}y_{k-1}^+, \tau_k^*)$ at some selected values of $k$, for computation using the initial grid $I = 320$ with $p=5$. (Above) $\gamma = 0, \delta = \sqrt{p} + 0.1$. (Below) $\gamma = 0, \delta =\sqrt{p} + 0.5$.}
\label{fig:GLb0d01}
\end{center}
\end{figure}
\noindent Consequently, if $p - \delta^2 - \gamma \delta - \gamma \delta p < 0$, the blow-up phenomenon may occur and there may exist a blow-up profile. So far, we have no answer for this case. We wonder whether its solution behaves as the solution in the case $p - \delta^2 - \gamma \delta - \gamma \delta p > 0$ with a different function of $b(\delta, \gamma)$ in formula of $\tilde{f}_{\delta, \gamma}$ given in \eqref{equ:proGLf2}.

\appendix
\renewcommand*{\thesection}{\Alph{section}}
\counterwithin{theo}{section}
\section{A regularity result for equation \eqref{equ:semiHeatEqu}} \label{ap:A}
\noindent We claim the following:
\begin{prop}[\textbf{Parabolic regularity}]\label{prop:reg}
Consider $u$ solution of 
\begin{equation} \label{eq:p}
\left\{\begin{array}{lll}
u_t &=  u_{xx} + |u|^{p - 1}u + \beta |u_x|^q,\quad & \text{in}\quad \Omega \times (0,T),\\
u(x,t)&= 0 \quad & \text{on}\quad \partial \Omega \times [0,T),\\
u(x,0) &= u_0(x), \quad & \text{on}\quad \bar{\Omega}.
\end{array}
\right.
\end{equation}
where $u(t): x \in \Omega \mapsto \mathbb{R}$ with $\Omega$ is an interval in $\mathbb{R}$, $p, q > 1$ and $\beta \in \mathbb{R}$.\\
Assume that $\|u_0\|_{\mathcal{C}^2(\bar{\Omega})} \leq C_0$ and $\|u\|_{\mathcal{C}(\bar{\Omega}\times [0, T_0])} + \|u_x\|_{\mathcal{C}(\bar{\Omega}\times [0, T_0])}\leq C_u$ with $T_0 < T$ ($T$ is the existence time of the maximal solution).  Then for all $t \in [0, T_0]$,\\
$i) \|u_{xx}(t)\|_{L^\infty(\bar{\Omega})} + \|u_t(t)\|_{L^\infty(\bar{\Omega})} \leq C$ for some $C = C(C_0, C_u, T_0, p, q, \beta)$.\\
Assume in addition, $\|u_0\|_{\mathcal{C}^4(\bar{\Omega})} \leq C_0$ and $p, q \geq 2$. Then for all $t \in [0, T_0]$,\\
$ii) \quad \|u_{xxx}(t)\|_{L^\infty(\bar{\Omega})} + \|u_{xxxx}(t)\|_{L^\infty(\bar{\Omega})} + \|u_{tt}(t)\|_{L^\infty(\bar{\Omega})} \leq C$ for some $C = C(C_0, C_u, T_0, p, q, \beta)$.
\end{prop}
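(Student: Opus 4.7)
The approach is a standard parabolic regularity bootstrap, applied successively to the equation and its $x$-differentiated versions. Under the a priori bounds of part (i), the nonlinear source $f(x,t) := |u|^{p-1}u + \beta|u_x|^q$ satisfies $\|f\|_{L^\infty(\bar\Omega \times [0,T_0])} \le C_u^p + |\beta|C_u^q$. Viewing the equation as the linear heat problem $u_t - u_{xx} = f$ with initial datum $u_0 \in C^2(\bar\Omega)$ and homogeneous Dirichlet data, I would first apply parabolic $W^{2,1}_r$ estimates to conclude $u \in W^{2,1}_r(\bar\Omega \times [0,T_0])$ for every $r < \infty$. Taking $r$ large and invoking parabolic Sobolev embedding yields $u \in C^{1+\alpha,(1+\alpha)/2}$ for some $\alpha \in (0,1)$, and hence $f \in C^{\alpha,\alpha/2}$, since $s \mapsto |s|^{p-1}s$ and $s \mapsto |s|^q$ are locally Lipschitz on $\mathbb{R}$ for $p, q \ge 1$. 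Feeding this back into parabolic Schauder estimates (Ladyzhenskaya--Solonnikov--Ural'tseva, Ch.~IV) produces $u \in C^{2+\alpha,1+\alpha/2}$, which contains the claimed $L^\infty$ bounds on $u_{xx}$ and $u_t$.

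For part (ii), the hypothesis $p, q \ge 2$ makes the maps $s \mapsto |s|^{p-1}s$ and $s \mapsto |s|^q$ of class $C^2$ with derivatives bounded on bounded subsets of $\mathbb{R}$, which is what permits differentiating the equation pointwise. Setting $v := u_x$, a direct calculation gives
\begin{equation*}
v_t = v_{xx} + p|u|^{p-1}\,v + \beta q |u_x|^{q-2}u_x \, v_x,
\end{equation*}
whose coefficients are Hölder continuous by part (i). The initial datum $v(\cdot,0) = u_0'$ lies in $C^3$, and the boundary trace of $v$ is Hölder: differentiating $u(\pm 1, t) = 0$ in $t$ gives $u_t(\pm 1, t) = 0$, whence the original equation forces $u_{xx}(\pm 1, t) = -\beta|u_x(\pm 1, t)|^q$. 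Applying parabolic Schauder to the linear equation for $v$ bounds $u_{xxx} = v_{xx}$ and $u_{xt} = v_t$ in $L^\infty$. A second differentiation in $x$ produces a linear parabolic equation for $w := u_{xx}$ with Hölder coefficients (again using $p, q \ge 2$), and another application of Schauder bounds $u_{xxxx} = w_{xx}$ and $u_{xxt}$. The $L^\infty$ bound on $u_{tt}$ is then immediate from differentiating the original equation in $t$:
\begin{equation*}
u_{tt} = u_{xxt} + p|u|^{p-1}u_t + \beta q|u_x|^{q-2}u_x\, u_{xt}.
\end{equation*}

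The principal technical obstacle is the corner compatibility conditions at $\partial\Omega \times \{0\}$ needed for Schauder regularity that is simultaneously up-to-the-boundary and up-to-$t=0$. For $u$ itself the zeroth-order condition $u_0 = 0$ on $\partial\Omega$ is given, but the higher-order compatibility required at each bootstrap step (e.g.\ $u_0''(\pm 1) + \beta|u_0'(\pm 1)|^q = 0$ for the first-order step) is not automatic. The cleanest remedy is to split the time interval: prove up-to-the-boundary Schauder estimates on $[\tau, T_0]$ for any small $\tau > 0$ with constants depending only on $\tau, C_u, T_0, p, q, \beta$, and bound the initial layer $[0,\tau]$ directly from the smoothness of $u_0$ via the Duhamel formula based on the Dirichlet heat semigroup, exploiting the smoothing estimate $\|\partial_x^k S(t)\phi\|_\infty \lesssim t^{-(k-j)/2}\|\phi\|_{C^j}$. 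Patching the two pieces yields the stated bounds with constants depending only on $C_0, C_u, T_0, p, q, \beta$.
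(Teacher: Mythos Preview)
Your approach is correct, but it differs substantially from the paper's. The paper avoids Schauder theory and $W^{2,1}_r$ estimates entirely and works directly with the Duhamel formula for the Dirichlet heat semigroup $e^{t\Delta}$, together with the smoothing estimate $\|e^{t\Delta}\partial_x\varphi\|_\infty \le C' t^{-1/2}\|\varphi\|_\infty$. For part~(i) the paper sets $h=u_{xx}$, writes its equation in divergence form
\[
h_t = h_{xx} + \partial_x\bigl(p|u|^{p-1}u_x + q\beta|u_x|^{q-2}u_x\,h\bigr),
\]
applies Duhamel, and closes with a singular Gronwall inequality of the form $\|h(t)\|_\infty \le C_1 + C_1\int_0^t (t-s)^{-1/2}\|h(s)\|_\infty\,ds$. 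Part~(ii) proceeds analogously for $v=u_{xxx}$ and then $\theta=u_{tt}$, each time writing the worst terms in divergence form so that the $\partial_x$ is absorbed by the semigroup; the bound on $u_{xxxx}$ is read off the equation once $u_{tt}$ is controlled.

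The trade-offs: your Schauder route delivers slightly more (H\"older regularity, not just $L^\infty$ bounds) but imports heavier black boxes and forces you to confront corner compatibility explicitly, which you handle by a $[0,\tau]\cup[\tau,T_0]$ splitting. The paper's semigroup--Gronwall argument is more elementary and self-contained, sidesteps compatibility by working entirely through mild solutions, and makes the dependence of constants on $C_0,C_u,T_0$ transparent. Both are valid; the paper's is the lighter-touch option here since only $L^\infty$ bounds are claimed.
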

\begin{rema} Chipot and Weissler showed in \cite{CWjma89} (see Proposition 2.2) that for $s \in \mathbb{R}$ sufficient large and $u_0 \in W_0^{1,s}(\Omega)$, then $u$, a solution of \eqref{eq:p}, satisfies that
$$\|u(t)\|_{L^\infty} \quad \text{and} \quad \|u_x(t)\|_{L^\infty} \quad \text{are bounded for any inteval $[0,T_0]$ with $T_0 < T$}.$$
\end{rema}
\begin{proof}[\textbf{Proof of Proposition \ref{prop:reg}}] In what follows, we write $\|\cdot\|_\infty = \|\cdot\|_{L^\infty(\bar{\Omega})}$ for simplicity and denote by $C_1, C_2,\dots$ constants depending only on $C_0, C_u, T_0, p, q$ and $\beta$.\\

\noindent $i)$ We see from \eqref{eq:p} that $\|u_t(t)\|_{\infty}$ is bounded on $[0,T_0]$ if $\|u_{xx}(t)\|_\infty$ is bounded on $[0, T_0]$. Let us consider $h = u_{xx}$, then $h$ satisfies
\begin{equation}\label{eq:hA}
h_t = h_{xx} + \partial_x\left(p|u|^{p-1}u_x + q\beta|u_x|^{q - 2}u_x h\right).
\end{equation}
An integral form of the solution of equation \eqref{eq:hA} is 
\begin{equation}\label{eq:hA1}
h(t) = e^{t\Delta }h(0) + \int_0^te^{(t -s)\Delta}\partial_x\left(p|u(s)|^{p-1}u_x(s) + q\beta|u_x(s)|^{q - 2}u_x(s) h(s)\right)ds,
\end{equation}
where $e^{t\Delta}$ denotes the heat semigroup on $\Omega$ with Dirichlet boundary condition.\\
Recall that for all $\varphi \in L^\infty$, 
\begin{equation}\label{eq:rePa}
\|e^{t\Delta}\varphi\|_\infty\leq \|\varphi\|_\infty \quad \text{and}\quad \|e^{t\Delta}\nabla \varphi\|_\infty \leq \frac{C'}{\sqrt{t}}\|\varphi\|_\infty.
\end{equation}
Since $u_0 \in \mathcal{C}^2$ and $\|u(t)\|_\infty, \|u_x(t)\|_\infty$ are bounded for all $t \in [0, T_0]$, then we have by \eqref{eq:rePa} and \eqref{eq:hA1} that
$$\|h(t)\|_\infty \leq C_1 + C_1\int_0^t\frac{\|h(s)\|_\infty}{\sqrt{t-s}} ds, \quad \forall t \in [0, T_0].$$
Using a Growall's argument, we have
$$\|h(t)\|_\infty \leq 2C_1e^{C_1\sqrt{T_0}}, \quad \forall t \in [0, T_0].$$
Therefore, $\|u_{xx}(t)\|_\infty$ is bounded for all $t \in [0, T_0]$ which concludes the proof of $i)$.\\

\noindent $ii)$ We assume additionally in what follows that $p, q \geq 2$, $\|u_0\|_{\mathcal{C}^4(\bar{\Omega})} \leq C_0$. Consider $v = u_{xxx}$, let us show that $\|v(t)\|_\infty$ is bounded for all $t \in [0, T_0]$.  From \eqref{eq:p} we see that $v$ satisfies the following equation
\begin{equation}\label{eq:vAp}
v_t = v_{xx} + p|u|^{p-1}v + \beta q\partial_x(|u_x|^{q-2}u_x v) + \phi + \partial_x \psi,
\end{equation}
where 
\begin{align*}
\phi &= p(p-1)|u|^{p-3}uu_xu_{xx},
\psi &= p(p-1)|u|^{p-3}u(u_x)^2 + \beta q(q-1)|u_x|^{q-2}(u_{xx})^2.
\end{align*}
We now use an integral formulation of \eqref{eq:vAp} to write
\begin{align*}
v(t) = e^{t\Delta}v(0) &+ p\int_0^te^{(t -s)\Delta}|u(s)|^{p-1}v(s)ds \\
&+ \beta q \int_0^te^{(t-s)\Delta}\partial_x(|u_x(s)|^{q-2}u_x(s) v(s))ds\\
&+ \int_0^te^{(t-s)\Delta}\phi(s)ds + \int_0^te^{(t-s)\Delta}\partial_x\psi(s)ds.
\end{align*}
From $(i)$ and the hypothesis on $u_0 \in \mathcal{C}^4$, we see that for all $t \in [0, T_0]$,
$$\|v(0)\|_\infty + \|u(t)\|_\infty^{p-1} + \|u_x(t)\|_\infty^{q-1} + \|\phi(t)\|_\infty + \|\psi(t)\|_\infty \leq C_2.$$
Hence, the use of \eqref{eq:rePa} yields
\begin{align*}
\|v(t)\|_\infty \leq C_2 + C_2\int_0^t \left(1 + \frac{1}{\sqrt{t-s}}\right)\|v(s)\|_\infty ds \leq 2C_2e^{C_2(T_0 + 2\sqrt{T_0})}, \quad \forall t \in [0, T_0],
\end{align*}
which follows that $\|u_{xxx}(t)\|_\infty$ is bounded on $[0, T_0]$.\\

\noindent We now bound $\|u_{tt}(t)\|_\infty$ on $[0, T_0]$. Consider $\theta = u_{tt}$, by \eqref{eq:p}, we see that $\theta$ satisfies
\begin{equation}\label{eq:theA}
\theta_t = \theta_{xx} + \eta \theta +  \beta q \partial_x\left(|u_x|^{q-2}u_{x} \theta \right) + \gamma,
\end{equation}
where
\begin{align*}
\eta & = p|u|^{p-1} - \beta q(q - 1)|u_x|^{q-2}u_{xx},\\
\gamma & = p(p-1)|u|^{p-3}u(u_t)^2 +\beta q(q-1)|u_x|^{q-2}\left(u_{xxx} + p|u|^{p-1}u_x + \beta q |u_x|^{q-2}u_x u_{xx}\right)^2.
\end{align*}
An integral form of the solution of equation \eqref{eq:theA} is
\begin{align*}
\theta(t) = e^{t\Delta }\theta(0) &+ \int_0^te^{(t - s)\Delta}\eta(s)\theta(s)ds \\
&+ \beta q\int_0^t e^{(t - s)\Delta}\partial_x\left(|u_x(s)|^{q-2}u_{x}(s)\theta(s) \right)ds + \int_{0}^te^{(t-s)\Delta}\gamma(s)ds.
\end{align*}
Since $u_0 \in \mathcal{C}^4$, then $\|\theta(0)\|_\infty = \|u_{tt}(0)\|_\infty$ is bounded. Using the fact that $\|u_{xxx}(t)\|_\infty$ is bounded on $[0,T_0]$ and $(i)$, we have by \eqref{eq:rePa} that  
$$\|\theta(t)\|_\infty \leq C_3 + C_3\int_0^t\left(1 + \frac{1}{\sqrt{t-s}}\right)\|\theta(s)\|_\infty ds \leq 2C_3e^{C_3(T_0 + 2\sqrt{T_0})}, \quad \forall t \in [0, T_0].$$
\noindent Since $\|u_{tt}(t)\|_{L^\infty}$ is bounded on $[0, T_0]$, we have from \eqref{eq:p} that $\|u_{xxxx}(t)\|_{L^\infty}$ is also bounded on $[0,T_0]$. This completes the proof of Proposition \ref{prop:reg}.
\end{proof}

\def\cprime{$'$}

\end{document}